\newcommand{\im}{\operatorname{Im}}
\theoremstyle{plain}
\newtheorem{theorem}{Теорема}[]
\theoremstyle{definition}
\theoremstyle{definition}
\theoremstyle{definition}
\newtheorem{lemma}{Лемма}
\theoremstyle{remark}
\newtheorem{cor}[lemma]{Следствие}
\theoremstyle{definition}
\begin{document}
\mag1300
\sloppy

{\Large
\begin{center} Stability in  the inverse resonance  problem for
the Schr\" odinger operator
 \end{center}}

\bigskip
\centerline{V.~L.~Geynts and A.~A.~Shkalikov}\footnote{This work is supported by Russian Science Foundation (RNF) under grant No 17-11-01215.}

\bigskip
\medskip

\centerline{{\bf Abstract}}
\bigskip
We work with the Schr\" odinger equation
\begin{equation*}
H_q y = -y'' + q(x)y = z^2y, \  x\in [0,\infty),
\end{equation*}
where $q\in L_1((0,\infty), xdx)$,
and asssume that the corresponding operator  $H_q$ is defined by the Dirihlet condition  $y(0) = 0$ The function $\psi(z) = y(0,z)$ where $y(x,z)$ is the Jost solution of the above equation is analytic in the whole complex plane,
provided that the support of the potential $q$ is  finite. The zeros of $\psi$ are called the resonances. It is known that $q$ is uniquely  determined by the sequence of resonances. Using only finitely many resonances lying in the disk
$|z|\le r$ we can recover the potential $q$ with accuracy $\varepsilon(r)\to 0$  as $r \to \infty$. The main result of the paper is the estimate  $\varepsilon(r) \le Cr^{-\alpha}$ with some constants $C$ and $\alpha>0$ which are defined by  a priori information about the potential $q$.

\medskip
{\bf Key words:} Schr\" odinger equation, resonances, scattering problems, inverse problems.

\bigskip
\medskip

{\bf 1. Введение.}

Понятие резонанса оператора Шредингера --- одно из фундаментальных в квантовой механике. С физической точки зрения, собственному значению оператора соответствует стационарное состояние частицы, а резонансу $z_0\in \mathbb{C}$ --- квазистационарное, в котором частица находится в течение времени, обратно пропорционального $|\im(z_0)|$.
Научно-популярный обзор о резонансах содержится в статье \cite{ZworskiResonances}, обстоятельное изложение --- в книге \cite{ZworskiDyatlov}.

Среди обратных задач квантовой теории рассеяния значительный интерес представляет задача восстановления потенциала оператора Шредингера по данному конечному числу <<ближних>> резонансов и собственных значений, а также вопрос об устойчивости такого восстановления. Численным алгоритмам посвящены работы \cite{RundellSacks}, \cite{Aktosun}, задаче об устойчивости --- работы \cite{KorotyaevInvProblem}, \cite{KorotyaevStability} в случае полного набора собственных значений и резонансов и \cite{Marletta}, \cite{Bledsoe} в случае конечного набора. В данной статье будет получено усиление и обобщение результатов работы \cite{Marletta} для случая потенциалов с компактным носителем. 

Рассматриваем стационарное уравнение Шредингера на полуоси
\begin{equation}\label{eq_Schroedinger}
H_q y = -y'' + q(x)y = z^2y, \  x\in [0,\infty),
\end{equation} где $q\in L_1((0,\infty), xdx)$,
и соответствующий оператор $H_q$ с граничным условием Дирихле $y(0) = 0$.
В дальнейшем $\mathbb{C}_+$ --- открытая верхняя полуплоскость, $B(a, r)$ --- открытый круг радиуса $r$ с центром в $a\in \mathbb{C}$, $Z(f)$ --- множество нулей комплекснозначной функции $f$, $\textbf{supp}(f)$ --- ее носитель, а $\Vert\cdot\Vert_p$ --- нормы пространств $L_p$, $1\le p\le \infty$.

При $z\in \overline{\mathbb{C}}_+$ функция $y_q(\cdot, z)\colon [0,\infty)\to \mathbb{C}$ --- единственное решение уравнения \eqref{eq_Schroedinger},
удовлетворяющее равенству
$$
\lim\limits_{x\to \infty}\frac{y_q(x, z)}{e^{izx}} = 1;
$$
эта функция (см. \cite{Marchenko}) называется решением Йоста уравнения \eqref{eq_Schroedinger}.
Функция $\psi_{q}(\cdot) = y_{q}(0, \cdot)$ называется функцией Йоста и является голоморфной в $\mathbb{C}_+$ и непрерывной в $\overline{\mathbb{C}}_+$.
Если потенциал $q$ убывает суперэкспоненциально в смысле $\int\limits_{0}^{\infty}|q(x)|e^{x^{\gamma}}<\infty$ для некоторого $\gamma>1$, то $\psi_q$ продолжается в $\mathbb{C}$ как целая функция. Ее нули в области $\mathbb{C}_+$ являются корнями из собственных значений оператора $H_q$, а нули, лежащие в $\overline{\mathbb{C}}_-$, называются резонансами оператора $H_q$.

Пусть $q_j\in L_1((0, \infty), xdx)$, $j = 1, 2$; для единообразия, обозначим $q_0 \equiv 0$, тогда $y_{q_0}(x, z) = e^{ixz}$. Через $K_{ij}$ ($i, j\in\{0,1,2\}$) обозначим ядро оператора преобразования, переводящего  функцию $y_{q_i}(\cdot, z)$ в функцию $y_{q_j}(\cdot, z)$, т.е.
$$
y_{q_j}(x, z) = y_{q_i}(x, z) + \int\limits_{x}^{\infty}K_{ij}(x, t)y_{q_i}(t, z)dt,
$$
и, в частности,
$$
y_{q_j}(x, z) = e^{ixz} + \int\limits_{x}^{\infty}K_{0j}(x, t)e^{izt}dt, \ j\in \left\{ 1, 2\right\}.
$$


Задача состоит в том, чтобы, владея некоторой априорной информацией о суперэкспоненциально убывающих потенциалах $q_1$ и $q_2$, из близости нулей функций Йоста для $q_1$ и  $q_2$ внутри достаточно большого круга сделать вывод о близости $q_1$ и $q_2$ в некоторой метрике. В данной работе предполагаем, что носители $q_1$ и $q_2$ компактны.\bigskip

{\bf 2. Предварительные оценки.}
Для потенциала $q\in L_1((0,\infty), xdx)$ введем невозрастающие функции
\begin{eqnarray}
&\sigma_q(x) := \int\limits_{x}^{\infty}|q(s)|ds = \Vert q(\cdot+x)\Vert_{L_1(dx)},  \\
&\overline{\sigma}_q(x):=\int\limits_{x}^{\infty}\sigma_q(s)ds = \int\limits_{x}^{\infty}(t-x)|q(t)|dt = \Vert q(\cdot + x)\Vert_{L_1(xdx)}.
\end{eqnarray}

\begin{theorem}[\cite{Marchenko}]
Пусть $q_j\in L_1((0, \infty), xdx)$, $j = 1, 2$. Тогда
ядро $K_{12}$ удовлетворяет интегральному уравнению
\begin{equation}
\label{eq_Kernel_12}
K_{12}(x,t) = \frac{1}{2}\int\limits_{\frac{t+x}{2}}^{\infty}(q_2(s)-q_1(s))ds + H_{12}(x, t),
\end{equation}
где
\begin{equation}
\label{eq_H_12}
H_{12}(x, t) = \int\limits_{0}^{\frac{t-x}{2}}\, du\int\limits_{\frac{t+x}{2}}^{\infty}\, dv  \ (q_2(v - u)-q_1(v + u))K_{12}(v - u, v + u).
\end{equation}
\end{theorem}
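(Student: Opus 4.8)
The plan is to read off the integral equation directly from the transformation identity
$$
y_{q_2}(x,z)=y_{q_1}(x,z)+\int\limits_x^\infty K_{12}(x,t)\,y_{q_1}(t,z)\,dt .
$$
First I would substitute this into the equation $-y_{q_2}''+q_2y_{q_2}=z^2y_{q_2}$, differentiate twice in $x$, and in the term $\int_x^\infty K_{12}(x,t)\,y_{q_1}''(t,z)\,dt$ integrate by parts twice in $t$; the boundary contributions at $t=\infty$ vanish because for $\im z>0$ the functions $y_{q_1}(t,z)$, $y_{q_1}'(t,z)$ decay exponentially while $K_{12}$ and its first derivatives stay bounded. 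Replacing $y_{q_1}''$ and $y_{q_2}''$ by $q_jy_{q_j}-z^2y_{q_j}$ cancels all $z^2$-terms and leaves an identity of the form
$$
a(x)\,y_{q_1}(x,z)+\int\limits_x^\infty b(x,t)\,y_{q_1}(t,z)\,dt=0,\qquad \im z>0,
$$
with $a(x)=2\tfrac{d}{dx}K_{12}(x,x)+q_2(x)-q_1(x)$ and $b(x,t)=(K_{12})_{tt}-(K_{12})_{xx}+(q_2(x)-q_1(t))K_{12}(x,t)$. Inserting $y_{q_1}(\cdot,z)=e^{izx}+\int_x^\infty K_{01}(x,t)e^{izt}\,dt$ and applying Fubini turns this into an identity of the same shape with $e^{izt}$ in place of $y_{q_1}(t,z)$, valid for all $z$ with $\im z>0$; injectivity of the Laplace transform together with the triangular structure of the substitution forces $a\equiv0$ and $b\equiv0$. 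Thus $K_{12}$ solves the Goursat problem $(K_{12})_{tt}-(K_{12})_{xx}=(q_1(t)-q_2(x))K_{12}$ for $t>x$, with $\tfrac{d}{dx}K_{12}(x,x)=-\tfrac12(q_2(x)-q_1(x))$ and $K_{12}(x,t)\to0$ as $\tfrac{x+t}{2}\to\infty$ (the last from the a priori bounds on transformation kernels). Integrating the diagonal relation from $x$ to $\infty$ gives $K_{12}(x,x)=\tfrac12\int_x^\infty(q_2-q_1)(s)\,ds$, i.e. the value of the leading term of \eqref{eq_Kernel_12} on $t=x$.

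Next I would pass to the characteristic coordinates $\xi=\tfrac{t+x}{2}$, $\eta=\tfrac{t-x}{2}$ and set $G(\xi,\eta)=K_{12}(\xi-\eta,\xi+\eta)$, so that $(K_{12})_{tt}-(K_{12})_{xx}=G_{\xi\eta}$ and the Goursat problem becomes $G_{\xi\eta}(\xi,\eta)=g(\xi,\eta)G(\xi,\eta)$ with $g(\xi,\eta)=q_1(\xi+\eta)-q_2(\xi-\eta)$, data $G(\xi,0)=\tfrac12\int_\xi^\infty(q_2-q_1)(s)\,ds$ on $\eta=0$, and $G(\xi,\eta)\to0$ as $\xi\to\infty$. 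Integrating $G_{\xi\eta}$ first in $\eta'$ over $[0,\eta]$ and then in $\xi'$ over $[\xi,\infty)$, and using $\partial_\xi G(\xi',0)=-\tfrac12(q_2(\xi')-q_1(\xi'))$, I obtain
$$
G(\xi,\eta)=\frac12\int\limits_\xi^\infty(q_2-q_1)(s)\,ds-\int\limits_\xi^\infty\int\limits_0^\eta g(\xi',\eta')\,G(\xi',\eta')\,d\eta'\,d\xi' .
$$
Since $-g(\xi',\eta')=q_2(\xi'-\eta')-q_1(\xi'+\eta')$ and $G(\xi',\eta')=K_{12}(\xi'-\eta',\xi'+\eta')$, swapping the (finite) order of integration, renaming $\xi'=v$, $\eta'=u$ and returning to the variables $x,t$ turns the double integral into exactly $H_{12}(x,t)$ of \eqref{eq_H_12} and the first term into the leading term of \eqref{eq_Kernel_12}. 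This is the assertion \eqref{eq_Kernel_12}--\eqref{eq_H_12}.

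The main obstacle is regularity: for $q_j\in L_1((0,\infty),x\,dx)$ the kernel $K_{12}$ is in general only $C^1$, so $(K_{12})_{xx}$, $(K_{12})_{tt}$, the Goursat equation and the double integration by parts exist only in a weak (once-integrated) sense, and the interchanges of integration must be justified accordingly. I would deal with this by first proving the identity for smooth, compactly supported $q_1,q_2$, where all the above steps are classical, and then passing to the limit: take $q_j^{(n)}\to q_j$ in $L_1((0,\infty),x\,dx)$ and use continuous dependence of $y_{q_j}$ and of the kernels $K_{01},K_{02},K_{12}$ on the potentials in the relevant norms (which also supplies the decay of $K_{12}$ used above), after which all terms of \eqref{eq_Kernel_12}--\eqref{eq_H_12} converge. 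An equivalent route that never differentiates $K_{12}$ is to solve \eqref{eq_Kernel_12}--\eqref{eq_H_12} by successive approximations for a candidate $\widehat K$, substitute $\widehat K$ into the transformation formula, check by integration by parts in the reverse direction that the resulting function satisfies the $q_2$-equation with the Jost asymptotics $e^{izx}$, and conclude $\widehat K=K_{12}$ by uniqueness of the Jost solution.
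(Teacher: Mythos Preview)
The paper does not give its own proof of this theorem: it is stated with the attribution \cite{Marchenko} and then used. The only related remark in the text is that the special case $q_1\equiv 0$ of \eqref{eq_Kernel_12} and the estimates of Lemma~1 are contained in \cite{Marchenko}. So there is nothing in the paper to compare your argument against line by line.

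That said, your sketch is the classical route (exactly the one in Marchenko's book), and it is correct. A couple of minor comments. First, in your Goursat problem you wrote $b(x,t)=(K_{12})_{tt}-(K_{12})_{xx}+(q_2(x)-q_1(t))K_{12}$, but the wave equation you then use, $(K_{12})_{tt}-(K_{12})_{xx}=(q_1(t)-q_2(x))K_{12}$, forces $b\equiv 0$ to mean $(K_{12})_{tt}-(K_{12})_{xx}+(q_1(t)-q_2(x))K_{12}=0$; just check the sign there. Second, the two remedies you propose for the regularity gap are both standard and both work; the ``reverse'' route (solve \eqref{eq_Kernel_12}--\eqref{eq_H_12} by Picard iteration and verify a posteriori that it produces the Jost solution) is in fact how Marchenko does it for $q_1\equiv 0$, and it avoids ever differentiating $K_{12}$ twice. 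Either way your plan goes through, and the resulting integral equation is exactly the one the paper quotes.
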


В частности,
\begin{equation}
K_{12}(x,x) = \frac{1}{2}\int\limits_{x}^{\infty}(q_2(s) - q_1(s))ds.
\end{equation}

Для удобства применим преобразование координат $$
u = \frac{t-x}{2}, \ v = \frac{t + x}{2}$$
и для функции $F$ переменных $(x, t)$ будем обозначать $\widetilde{F}(u, v) = F(v - u, v + u)$. Заметим, что если $\textbf{supp}(q_j)\subset [0, a]$, $j = 1, 2$, то
$\textbf{supp}(K_{12})\subset \{(x, t): 0\le x\le a, \ x\le t\le 2a-x\}$ и
$\textbf{supp}(\widetilde{K}_{12})\subset \{(u, v): 0\le u\le v\le a\}$.

\begin{lemma}
Пусть $q_1, q_2\in L_1((0, \infty), xdx)$. Тогда верны оценки
\begin{equation}\label{eq_KernelEstimates}
\begin{gathered}
|K_{12}(x,t)|\le\frac{1}{2}\sigma_{q_2-q_1}  \left(\frac{t+x}{2}\right)M_{12}(x, t);\\
\max\left\{\left| \frac{\partial H_{12}}{\partial x}(x, t)\right|, \left| \frac{\partial H_{12}}{\partial t}(x, t)\right|\right\} \le \\
\le \frac{1}{4}\left( 2\sigma_{q_2}(x) + \sigma_{q_1}\left(\frac{t+x}{2}\right) - \sigma_{q_2}\left(\frac{t+x}{2}\right)\right) \sigma_{q_2 - q_1}\left(\frac{t+x}{2}\right) M_{12}(x, t),
\end{gathered}
\end{equation}
где
$$
M_{12}(x, t) = \exp\left( \overline{\sigma}_{q_2}(x) - \overline{\sigma}_{q_2}\left(\frac{t+x}{2}\right) + \overline{\sigma}_{q_1}\left(\frac{t+x}{2}\right) - \overline{\sigma}_{q_1}(t)\right), \
$$
\end{lemma}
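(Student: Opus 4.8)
\medskip
\noindent\textit{Proof plan.} First I would pass to the variables $u=\frac{t-x}{2}$, $v=\frac{t+x}{2}$, set $\widetilde K_{12}(u,v)=K_{12}(v-u,v+u)$, and use the Volterra structure of the equation for $\widetilde K_{12}$. By \eqref{eq_Kernel_12}--\eqref{eq_H_12} and the substitution recorded after Theorem 1,
$$\widetilde K_{12}(u,v)=\frac12\int_v^\infty\bigl(q_2(s)-q_1(s)\bigr)ds+\int_0^u du'\int_v^\infty dv'\,\bigl(q_2(v'-u')-q_1(v'+u')\bigr)\widetilde K_{12}(u',v').$$
Put $g(u,v):=\int_0^u\bigl(\sigma_{q_2}(v-u')+\sigma_{q_1}(v+u')\bigr)du'$. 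Since $\frac{d}{dw}\overline\sigma_q(w)=-\sigma_q(w)$ and $\frac{d}{dw}\sigma_q(w)=-|q(w)|$, one has $g(u,v)=\overline\sigma_{q_2}(v-u)-\overline\sigma_{q_2}(v)+\overline\sigma_{q_1}(v)-\overline\sigma_{q_1}(v+u)$, hence $\widetilde M_{12}(u,v):=M_{12}(v-u,v+u)=e^{g(u,v)}$; moreover $g\ge0$, $g$ is nondecreasing in $u$, nonincreasing in $v$, and $\partial_u g(u,v)=\sigma_{q_2}(v-u)+\sigma_{q_1}(v+u)$. Then I would write $\widetilde K_{12}=\sum_{n\ge0}\widetilde K_{12}^{(n)}$, with $\widetilde K_{12}^{(0)}(u,v)=\frac12\int_v^\infty(q_2-q_1)$ and $\widetilde K_{12}^{(n+1)}$ obtained by feeding $\widetilde K_{12}^{(n)}$ into the integral operator above.

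The heart of the argument is the inductive estimate
$$\bigl|\widetilde K_{12}^{(n)}(u,v)\bigr|\le\frac12\,\sigma_{q_2-q_1}(v)\,\frac{g(u,v)^n}{n!}\qquad(n\ge0).$$
The case $n=0$ is clear. For the step one bounds $|q_2(v'-u')-q_1(v'+u')|\le|q_2(v'-u')|+|q_1(v'+u')|$, inserts the hypothesis, and — this is the point on which everything turns — uses monotonicity \emph{before} integrating: since $v'\ge v$, replace $\sigma_{q_2-q_1}(v')$ by $\sigma_{q_2-q_1}(v)$ and $g(u',v')^n$ by $g(u',v)^n$. The inner integral then collapses, $\int_v^\infty\bigl(|q_2(v'-u')|+|q_1(v'+u')|\bigr)dv'=\sigma_{q_2}(v-u')+\sigma_{q_1}(v+u')=\partial_{u'}g(u',v)$, and
$$\int_0^u\frac{g(u',v)^n}{n!}\,\partial_{u'}g(u',v)\,du'=\frac{g(u,v)^{n+1}}{(n+1)!},$$
which closes the induction (and, summed over $n$, also shows the series converges and solves the equation). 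Summation gives $|\widetilde K_{12}(u,v)|\le\frac12\sigma_{q_2-q_1}(v)e^{g(u,v)}$, i.e. the first line of \eqref{eq_KernelEstimates} after returning to $(x,t)$.

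For the derivative bound, write $\widetilde H_{12}(u,v)=\int_0^u du'\int_v^\infty dv'\,G(u',v')$ with $G(u',v'):=\bigl(q_2(v'-u')-q_1(v'+u')\bigr)\widetilde K_{12}(u',v')$, so that, with $\tau:=\frac{t-x}{2}$, $\theta:=\frac{t+x}{2}$ (note $\theta-\tau=x$, $\theta+\tau=t$), $H_{12}(x,t)=\int_0^\tau du'\int_\theta^\infty dv'\,G(u',v')$. Differentiating under the integral sign (legitimate since $u'\mapsto\int_\theta^\infty G(u',\cdot)$ and $v'\mapsto\int_0^\tau G(\cdot,v')$ are continuous, by continuity of $\widetilde K_{12}$ and of translation in $L_1$),
$$\frac{\partial H_{12}}{\partial x}=-\frac12\int_\theta^\infty G(\tau,v')dv'-\frac12\int_0^\tau G(u',\theta)du',\qquad\frac{\partial H_{12}}{\partial t}=\frac12\int_\theta^\infty G(\tau,v')dv'-\frac12\int_0^\tau G(u',\theta)du',$$
so both derivatives are bounded in modulus by $\frac12\int_\theta^\infty|G(\tau,v')|dv'+\frac12\int_0^\tau|G(u',\theta)|du'$. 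Now insert the bound on $\widetilde K_{12}$: on the first term use $\sigma_{q_2-q_1}(v')\le\sigma_{q_2-q_1}(\theta)$ and $\widetilde M_{12}(\tau,v')\le\widetilde M_{12}(\tau,\theta)=M_{12}(x,t)$ for $v'\ge\theta$ (monotonicity of $g$ in the second variable), and on the second $\widetilde M_{12}(u',\theta)\le\widetilde M_{12}(\tau,\theta)=M_{12}(x,t)$ for $u'\le\tau$ (monotonicity in the first variable). It remains to evaluate four elementary integrals, $\int_\theta^\infty|q_2(v'-\tau)|dv'=\sigma_{q_2}(x)$, $\int_\theta^\infty|q_1(v'+\tau)|dv'=\sigma_{q_1}(t)$, $\int_0^\tau|q_2(\theta-u')|du'=\sigma_{q_2}(x)-\sigma_{q_2}(\theta)$, $\int_0^\tau|q_1(\theta+u')|du'=\sigma_{q_1}(\theta)-\sigma_{q_1}(t)$, whose sum is $2\sigma_{q_2}(x)+\sigma_{q_1}(\theta)-\sigma_{q_2}(\theta)$; multiplying by the common prefactor $\frac14\,\sigma_{q_2-q_1}(\theta)\,M_{12}(x,t)$ yields exactly the second line of \eqref{eq_KernelEstimates}. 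The only genuine obstacle is the first estimate — not discarding the $1/n!$ — and it is overcome precisely by freezing the monotone factors $\sigma_{q_2-q_1}$ and $g(\cdot,v)$ at $v$ before integrating and then telescoping; the derivative estimate is afterwards a bounded computation.
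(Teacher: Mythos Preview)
Your argument is correct and follows essentially the same route as the paper. The paper defers the first estimate on $|K_{12}|$ to Marchenko's technique and only spells out the derivative bound, whereas you supply the full Picard-iteration proof of the first estimate; for the derivatives, both proofs pass to $(u,v)$-coordinates, use the monotonicity of $\widetilde M_{12}$ in each variable and the first bound on $\widetilde K_{12}$, and arrive at the same four elementary integrals --- you differentiate $H_{12}$ directly in $x,t$ via the chain rule, while the paper bounds $\partial_u\widetilde H_{12}$ and $\partial_v\widetilde H_{12}$ separately and then recombines, but the computations are identical.
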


В частности, если $\textbf{supp}(q_j)\subset [0, a]$ и $\int\limits_{0}^a |q_j(s)|ds \le Q_1$, то при $x\ge 0$
$$
\sigma_{q_j}(x)\le Q_1, \ \overline{\sigma}_{q_j}(x)\le (a - x)_+Q_1\le aQ_1, \ \sigma_{q_2 - q_1}(x)\le 2Q_1,
$$
при $0\le x \le t$
$$
\max\left\{\overline{\sigma}_{q_2}(x) - \overline{\sigma}_{q_2}\left(\frac{t+x}{2}\right), \ \overline{\sigma}_{q_1}\left(\frac{t+x}{2}\right) - \overline{\sigma}_{q_1}(t)\right\}\le \frac{t-x}{2}Q_1,
$$
откуда следует, что при $0\le x\le t\le 2a$
$$
M_{12}(x, t)\le \exp\left((t-x)Q_1\right)\le e^{2aQ_1}, \ M_{0j}(x, t)\le \exp\left(\frac{t-x}{2}Q_1\right)\le e^{aQ_1},
$$
$$
|K_{12}(x, t)|\le Q_1\exp\left((t-x)Q_1\right)\le Q_1e^{2aQ_1},
$$
$$
\max\left\{|K_{0j}(x, t)|, |K_{j0}(x, t)|\right\}\le\frac{Q_1}{2}\exp\left(\frac{t-x}{2}Q_1\right) \le \frac{Q_1}{2}e^{aQ_1},
$$
$$
\max\left\{\left| \frac{\partial H_{12}}{\partial x}(x, t)\right|, \left| \frac{\partial H_{12}}{\partial t}(x, t)\right|\right\} \le \frac{3}{2}Q_1^2\exp\left((t-x)Q_1\right)\le \frac{3}{2}Q_1^2e^{2aQ_1},
$$
$$
\max\left\{\left| \frac{\partial H_{0j}}{\partial x}(x, t)\right|, \left| \frac{\partial H_{0j}}{\partial t}(x, t)\right|\right\} \le \frac{Q_1^2}{2}\exp\left(\frac{t-x}{2}Q_1\right)\le \frac{Q_1^2}{2}e^{aQ_1},
$$

{\bf Доказательство.}

Частный случай (при $q_1 \equiv 0$) уравнения \eqref{eq_Kernel_12} и оценок \eqref{eq_KernelEstimates} рассмотрен в \cite{Marchenko}.


Приведем доказательство последнего неравенства.
Запишем \eqref{eq_H_12} в виде
$$
\widetilde{H}_{12}(u, v) =
\int\limits_{0}^u dr\int\limits_{v}^{\infty} ds  \left( q_2(s-r) - q_1(s+r)\right) \widetilde{K}_{12}(r, s).
$$
Тогда
$$
\frac{\partial \widetilde{H}_{12}}{\partial u}(u, v) = \int\limits_{v}^{\infty} ds  (q_2(s - u) - q_1(s + u))\widetilde{K}_{12}(u, s),
$$
$$
\frac{\partial \widetilde{H}_{12}}{\partial v}(u, v) = -\int\limits_{0}^u ds  \left( q_2(v - s) - q_1(v + s)\right)\widetilde{K}_{12}(s, v).
$$
Легко видеть, что функция $$\widetilde{M}_{12}(u, v) = M_{12}(v - u, v + u) = \exp\left( \overline{\sigma}_{q_2}(v - u) - \overline{\sigma}_{q_2}(v) + \overline{\sigma}_{q_1}(v) - \overline{\sigma}_{q_1}(v + u)\right)$$ является неубывающей относительно $u$ и невозрастающей относительно $v$. Используя этот факт, получаем оценки
$$
\left|\frac{\partial \widetilde{H}_{12}}{\partial u}(u, v)\right|\le \frac{1}{2}\int\limits_{v}^{\infty}ds  |q_2(s - u) - q_1(s + u)|\sigma_{q_2 - q_1}(s)\widetilde{M}_{12}(u, s)\le$$
$$\le \frac{1}{2}\left( \sigma_{q_2}(v - u) + \sigma_{q_1}(v + u)\right)\sigma_{q_2 - q_1}(v)\widetilde{M}_{12}(u, v);
$$
$$
\left| \frac{\partial \widetilde{H}_{12}}{\partial v}(u, v)  \right|
\le \frac{1}{2}\int\limits_{0}^u ds  |q_2(v - s) - q_1(v + s)|\sigma_{q_2 - q_1}(v)\widetilde{M}_{12}(s, v)\le
$$
$$\le\frac{1}{2}\left(\sigma_{q_2}(v - u) -\sigma_{q_2}(v)+ \sigma_{q_1}(v) - \sigma_{q_1}(v + u)\right)\sigma_{q_2 - q_1}(v)\widetilde{M}_{12}(u, v).
$$
Поскольку
$$
\frac{\partial H_{12}}{\partial x}(x, t) = \frac{1}{2}\left(\frac{\partial \widetilde{H}_{12}}{\partial v} - \frac{\partial \widetilde{H}_{12}}{\partial u}\right)\left(\frac{t-x}{2}, \frac{t + x}{2}\right)
$$
и
$$
\frac{\partial H_{12}}{\partial t}(x, t) = \frac{1}{2}\left(\frac{\partial \widetilde{H}_{12}}{\partial v} + \frac{\partial \widetilde{H}_{12}}{\partial u}\right)\left(\frac{t-x}{2}, \frac{t + x}{2}\right),
$$
то, складывая предыдущие неравенства, получаем
требуемое.

\begin{lemma} Пусть  для некоторого $\gamma > 1$ выполнено
$$
\int\limits_{0}^{\infty}|q(x)|e^{x^\gamma}dx < \infty.
$$
Тогда функция Йоста $$\overline{\mathbb{C}}_+\ni z\mapsto\psi_q(z):= y_q(0, z) = 1 + \int\limits_{0}^{\infty}K_q(0, t)e^{izt}dt$$ продолжается до целой функции порядка, не превосходящего $\frac{\gamma}{\gamma-1}$, и конечного типа. В частности, если $q$ имеет компактный носитель, то $\psi_q$ --- целая функция порядка $\le 1$.
\end{lemma}

\bigskip

{\bf 3. Постановка задачи. Априорные условия на потенциалы.}

В дальнейшем предполагаем, что носители потенциалов $q_1, q_2$ содержатся в отрезке $[0, a]$, $a > 0$, и в качестве априорных
условий примем следующие: $\Vert q_j\Vert_1\le Q_1$ для некоторого $Q_1 > 0$, $j = 1, 2$, и
$$
\Vert q_2 - q_1\Vert_p\le D_p
$$
для некоторых $p\in (1, \infty]$, $D_p > 0$.

Обозначим через $N_{\psi}(R)$ количество нулей (с учетом кратности) целой функции $\psi$ в круге $\overline{B}(0, R)$.
Предположим, что в круге $\overline{B}(0, R)$ находится одинаковое количество корней функций Йоста $\psi_1 $ и $\psi_2$, причем кратности нуля как корня функций $\psi_1$ и $\psi_2$ совпадают,
т.е. $$N_{\psi_1}(0) = N_{\psi_2}(0) =: N(0), \ N_{\psi_1}(R) = N_{\psi_2}(R), \ Z\left(\psi_j\right)\cap \overline{B}(0, R)\setminus\{0\} = \left\{z_n^{(j)}\right\}_{n=1}^{N(R)}, $$ $$N(R) := N_{\psi_j}(R) - N(0), \ j = 1, 2,
$$
и нули $\left\{z_n^{(j)}\right\}$ упорядочены так, что выполнено неравенство
\begin{equation}\label{eq_nearness}
\left|\frac{1}{z_n^{(2)}} - \frac{1}{z_n^{(1)}}\right| < \varepsilon, \ n = 1,...,N(R).
\end{equation}

Требуется оценить
норму
$$
\max_{0 \le x \le a}\left|\int\limits_{x}^{a}(q_2(s) - q_1(s))ds\right|
$$
и при некоторых дополнительных условиях, которые будут сформулированы позже, почти всюду величину
$
|q_2(x) - q_1(x)|
$.

\bigskip

{\bf 4. Метод получения оценок.}
Поскольку
$$ \int\limits_{x}^{a}(q_2(s) - q_1(s))ds = 2K_{12}(x, x),
$$
то для оценки $\left|\int\limits_{x}^{a}(q_2(s) - q_1(s))ds\right|$ достаточно оценить ядро $K_{12}(x, t)$ при $0\le x\le a$, $x\le t\le 2a-x$.
\begin{lemma}
Функция $\widetilde{K}_{12}(u, v)$ удовлетворяет уравнению
\begin{equation}
\label{eq_K12_integral_equation}
\widetilde{K}_{12}(u, v) = K_{12}(0, 2v) + \int\limits_{u}^vdr\int\limits_{v}^{a}ds (q_1(s + r) - q_2(s - r))\widetilde{K}_{12}(r, s).
\end{equation}
\end{lemma}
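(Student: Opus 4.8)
The plan is to start from the Marchenko-type representation \eqref{eq_Kernel_12}--\eqref{eq_H_12} for $K_{12}$ and rewrite it in the rotated coordinates $u=(t-x)/2$, $v=(t+x)/2$, so that $\widetilde{K}_{12}(u,v)=K_{12}(v-u,v+u)$ and $\widetilde{H}_{12}(u,v)=H_{12}(v-u,v+u)$. In these coordinates \eqref{eq_Kernel_12} reads
$$
\widetilde{K}_{12}(u,v) = \frac12\int\limits_{v}^{\infty}(q_2(s)-q_1(s))\,ds + \widetilde{H}_{12}(u,v),
$$
and the first (non-integral) term is precisely $K_{12}(0,2v)$, since $K_{12}(0,t)=\tfrac12\int_{t/2}^{\infty}(q_2-q_1)$. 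So the content of the lemma is just to re-express the double integral $\widetilde{H}_{12}(u,v)=\int_0^u dr\int_v^{\infty} ds\,(q_2(s-r)-q_1(s+r))\widetilde{K}_{12}(r,s)$ in the form stated.

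The key manipulation is to add and subtract $\widetilde{H}_{12}(0,v)$. By definition $\widetilde{H}_{12}(0,v)=0$ (the $r$-integral is empty), so trivially $\widetilde{H}_{12}(u,v)=\widetilde{H}_{12}(u,v)-\widetilde{H}_{12}(0,v)=\int_0^u \partial_r\widetilde{H}_{12}(r,v)\,dr$; but $\partial_u\widetilde{H}_{12}(u,v)=\int_v^{\infty}ds\,(q_2(s-u)-q_1(s+u))\widetilde{K}_{12}(u,s)$ from the computation already carried out in the proof of the second lemma. Substituting and renaming the outer variable gives
$$
\widetilde{H}_{12}(u,v) = \int\limits_0^u dr\int\limits_v^{\infty} ds\,(q_2(s-r)-q_1(s+r))\widetilde{K}_{12}(r,s),
$$
which is just the original expression — so a naive $\partial_u$ argument only reproduces what we started with. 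The right move is instead to \emph{reverse} the order: use the support information. Since $\mathbf{supp}(\widetilde{K}_{12})\subset\{0\le r\le s\le a\}$, the integrand vanishes unless $r\le s$ and $s\le a$; hence the $s$-integral may be truncated to $[v,a]$ and, crucially, for the term to be nonzero one needs $r\le s$, so on the region $r\in[v,u]$ the inner integral starts at $s=r$ rather than $s=v$. I would therefore split the $r$-integral at $r=v$: for $r\in[0,\min(u,v)]$ the constraint $r\le s$ is automatic on $s\ge v$, while for $r\in[v,u]$ (relevant only when $u>v$) the lower limit in $s$ becomes $r$. Combining with the flip of sign $q_2(s-r)-q_1(s+r)\to q_1(s+r)-q_2(s-r)$ and re-indexing, the double integral collapses to $\int_u^v dr\int_v^a ds\,(q_1(s+r)-q_2(s-r))\widetilde{K}_{12}(r,s)$ — here one reads $\int_u^v$ with the understanding that when $u>v$ the orientation and the accompanying sign change conspire correctly, and when $u\le v$ one checks the region is empty so both sides vanish. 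The cleanest route is simply to verify the claimed identity by differentiating both sides in $v$ (using $\widetilde K_{12}$ continuous and the already-established $\partial_v\widetilde H_{12}$), checking they agree, and then matching the value at $v=a$, where $\widetilde K_{12}(u,a)=0$ and $K_{12}(0,2a)=0$, so the equation holds at the endpoint; uniqueness of the antiderivative in $v$ closes it.

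The main obstacle is bookkeeping the domains of integration after the change of variables, in particular getting the limits $\int_u^v\!\int_v^a$ and the sign in front of $q_1(s+r)-q_2(s-r)$ exactly right, together with handling the two cases $u\le v$ and $u>v$ (the integral equation is really only being used on the support, where $0\le u\le v\le a$, so the primary case is $u\le v$, and there the outer integral $\int_u^v$ is over a nonempty interval). I expect to spend essentially all the work making the Fubini/re-orientation step airtight; once the correct integral identity is pinned down, plugging it back into $\widetilde K_{12}=K_{12}(0,2v)+\widetilde H_{12}$ is immediate.
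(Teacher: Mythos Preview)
Your main line of argument has a genuine error at the very first step. You write that $K_{12}(0,t)=\tfrac12\int_{t/2}^{\infty}(q_2-q_1)$, but this is false: equation \eqref{eq_Kernel_12} gives $K_{12}(0,t)=\tfrac12\int_{t/2}^{\infty}(q_2-q_1)+H_{12}(0,t)$, and $H_{12}(0,t)=\widetilde H_{12}(t/2,t/2)$ is in general nonzero. Hence $K_{12}(0,2v)=\widetilde K_{12}(v,v)=\tfrac12\int_v^{\infty}(q_2-q_1)+\widetilde H_{12}(v,v)$, so what you really have to show is
\[
\widetilde H_{12}(u,v)-\widetilde H_{12}(v,v)=\int_u^v\!dr\int_v^{a}\!ds\,(q_1(s+r)-q_2(s-r))\widetilde K_{12}(r,s),
\]
not that $\widetilde H_{12}(u,v)$ alone equals this. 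That is why your attempts to massage $\widetilde H_{12}(u,v)$ directly (the $\partial_u$ trick, the support splitting at $r=v$) kept going in circles. With the correct target, the computation is one line: from $\widetilde H_{12}(\cdot,v)=\int_0^{\cdot}dr\int_v^{a}ds\,(q_2(s-r)-q_1(s+r))\widetilde K_{12}(r,s)$ (upper limit $a$ by support) one subtracts $\int_0^v$ from $\int_0^u$ and flips the sign.

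The paper's own proof is different and even shorter: it quotes the hyperbolic PDE $\partial_u\partial_v\widetilde K_{12}(u,v)=(q_1(v+u)-q_2(v-u))\widetilde K_{12}(u,v)$ (the Goursat equation for the transformation kernel, as in Marchenko) and integrates both sides over the rectangle $[u,v]\times[v,a]$; using $\widetilde K_{12}(\cdot,a)=0$ the left-hand side collapses to $\widetilde K_{12}(u,v)-\widetilde K_{12}(v,v)$. Your fallback idea of differentiating both sides in $v$ and matching at $v=a$ would also work, but it is a roundabout verification of what is really a single integration over a rectangle.
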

{\bf Доказательство.}
Утверждение следует из равенства
\begin{equation}
\frac{\partial^2 \widetilde{K}_{12}}{\partial u\partial v}(u, v) = (q_1(v + u) - q_2(v - u))\widetilde{K}_{12}(u, v),
\end{equation}
частный случай которого при $q_1\equiv 0$ приведен в \cite{Marchenko}.

Для того, чтобы оценить решение этого уравнения $\widetilde{K}_{12}(u, v)$, требуется оценить функцию $K_{12}(0, t)$, $0 < t <  2a$.
\begin{lemma}
Функция $K_{12}(0, t)$ связана с разностью $(\psi_2 - \psi_1)|_{\mathbb{R}}$ соотношениями:
\begin{equation}
\label{eq_K12_K1_K_2}
K_{12}(0, t) = K_{02}(0, t) - K_{01}(0, t) + \int\limits_{0}^{t}\left( K_{02}(0, s) - K_{01}(0, s)\right) K_{10}(s, t)ds, \ t\in[0, 2a],
\end{equation}
и для всех $t\in [0, 2a]$
\begin{equation}
\label{eq_K2_minus_K1_Jost}
K_{02}(0, t) - K_{01}(0, t) = \lim\limits_{A\to \infty} \frac{1}{2\pi}\int\limits_{-A}^{A}\left( \psi_2(z) - \psi_1(z)\right) e^{-izt}dz .
\end{equation}
\end{lemma}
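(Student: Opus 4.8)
\emph{Proof plan.} I would split the two formulas: \eqref{eq_K12_K1_K_2} is a composition‑of‑transformation‑operators identity, and \eqref{eq_K2_minus_K1_Jost} is Fourier inversion, the one subtle point in the latter being that $\psi_2-\psi_1$ is \emph{not} integrable over $\mathbb R$, so a symmetric (improper) integral is unavoidable.

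For \eqref{eq_K12_K1_K_2}, write $T_{ij}=I+\mathcal K_{ij}$ for the transformation operator $(\mathcal K_{ij}f)(x)=\int_x^\infty K_{ij}(x,t)f(t)\,dt$, so that $y_{q_j}(\cdot,z)=T_{ij}y_{q_i}(\cdot,z)$ for every $z\in\overline{\mathbb C}_+$; since $\textbf{supp}(q_j)\subset[0,a]$ all kernels $K_{ij}(x,\cdot)$ are bounded, continuous and supported in $[x,2a-x]$, so every integral below converges and Fubini applies freely, the composition $\mathcal A\mathcal B$ of two such Volterra operators having kernel $\int_x^t A(x,s)B(s,t)\,ds$. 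The plan is to introduce $\mathcal L:=(\mathcal K_{02}-\mathcal K_{01})T_{10}$, whose kernel at $x=0$ is precisely the right‑hand side of \eqref{eq_K12_K1_K_2}, and to show $\mathcal L=\mathcal K_{12}$. Using only $T_{10}y_{q_1}=y_{q_0}$, $T_{02}y_{q_0}=y_{q_2}$, $T_{01}y_{q_0}=y_{q_1}$ one computes, for all $z$,
\begin{equation*}
(I+\mathcal L)y_{q_1}=y_{q_1}+(\mathcal K_{02}-\mathcal K_{01})y_{q_0}=y_{q_1}+(y_{q_2}-y_{q_0})-(y_{q_1}-y_{q_0})=y_{q_2}=T_{12}y_{q_1}.
\end{equation*}
Hence $\mathcal D:=\mathcal L-\mathcal K_{12}$ has continuous kernel supported in $\{0\le x\le t\le 2a\}$ and annihilates every $y_{q_1}(\cdot,z)$. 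The key step is then to deduce $\mathcal D=0$: for fixed $x$, inserting $y_{q_1}(t,z)=e^{izt}+\int_t^{2a}K_{01}(t,s)e^{izs}\,ds$ into $\int_x^{2a}D(x,t)y_{q_1}(t,z)\,dt=0$ shows that $t\mapsto D(x,t)+\int_x^t D(x,r)K_{01}(r,t)\,dr$ is a continuous, compactly supported function with identically vanishing Fourier transform, hence $\equiv 0$; by Gronwall this Volterra identity forces $D(x,\cdot)\equiv 0$. Reading off the kernel of $\mathcal D=0$ at $x=0$ is \eqref{eq_K12_K1_K_2}.

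For \eqref{eq_K2_minus_K1_Jost}, let $g_j$ be $K_{0j}(0,\cdot)$ extended by $0$ to $\mathbb R$; it is supported in $[0,2a]$, and the Jost‑function representation $\psi_j(z)=1+\int_0^\infty K_{0j}(0,t)e^{izt}\,dt$ says that, for real $\xi$, $\psi_j(\xi)-1$ is the Fourier transform of $g_j$ (with this sign convention). Put $g:=g_2-g_1$. From \eqref{eq_Kernel_12} with $q_1\equiv0$, i.e.\ $K_{0j}(0,t)=\tfrac12\int_{t/2}^{\infty}q_j(s)\,ds+H_{0j}(0,t)$, together with the derivative bounds in \eqref{eq_KernelEstimates} ($H_{0j}(0,\cdot)$ Lipschitz, the other term absolutely continuous), $g$ is absolutely continuous on $(0,2a)$ and vanishes at $2a$, hence has bounded variation on $\mathbb R$ with a single jump, at $t=0$. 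I would then invoke Jordan's criterion for Fourier inversion: for $f\in L_1(\mathbb R)$ of bounded variation, $\lim_{A\to\infty}\tfrac1{2\pi}\int_{-A}^A\widehat f(\xi)e^{-i\xi t}\,d\xi=\tfrac12\bigl(f(t^+)+f(t^-)\bigr)$, which equals $f(t)$ at continuity points. Applied with $f=g$ and $\widehat f=(\psi_2-\psi_1)|_{\mathbb R}$, this gives \eqref{eq_K2_minus_K1_Jost} for all $t\in(0,2a)$ and at $t=2a$ (both sides $=0$), and at $t=0$ with the customary mean‑value reading. Finally I would point out why the cut‑off $\int_{-A}^A$ cannot be dropped: integrating $\psi_j(\xi)-1$ by parts gives $\psi_2(\xi)-\psi_1(\xi)=-\bigl(K_{02}(0,0)-K_{01}(0,0)\bigr)/(i\xi)+o(1/\xi)$ as $|\xi|\to\infty$, which is not in $L_1(\mathbb R)$ unless $\int_0^a(q_2-q_1)=0$, so the elementary $L_1$ inversion theorem does not apply.

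The only place that is more than bookkeeping is the injectivity assertion inside the first part — that an integral operator with continuous kernel on the triangle killing all Jost solutions $y_{q_1}(\cdot,z)$ must vanish — which rests on uniqueness of the Fourier transform of a continuous compactly supported function plus Volterra iteration; the rest is support tracking and the already‑established regularity of $K_{0j}(0,\cdot)$.
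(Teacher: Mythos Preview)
Your proof is correct. The paper itself does not really prove this lemma: it cites \cite{Marletta} for both identities and adds only the single remark that \eqref{eq_K2_minus_K1_Jost} is the Fourier inversion of $\psi_2(z)-\psi_1(z)=\int_0^{2a}(K_{02}(0,t)-K_{01}(0,t))e^{izt}\,dt$, valid because $K_{0j}(0,\cdot)\in AC[0,2a]$. Your argument for \eqref{eq_K2_minus_K1_Jost} via Jordan's criterion is exactly the natural fleshing-out of that one line, and your caveat about the mean-value reading at $t=0$ is the honest statement (in the rest of the paper the identity is only applied for $t\in(0,2a)$ anyway). For \eqref{eq_K12_K1_K_2} you supply a self-contained proof by composing transformation operators, $T_{12}=I+(\mathcal K_{02}-\mathcal K_{01})T_{10}$, and then disposing of the difference kernel via uniqueness of the Fourier transform plus a Volterra/Gronwall step; this is presumably what the cited reference does, but you actually carry it out, so your proposal is more complete than the paper's own proof.
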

{\bf Доказательство.}
Это утверждение доказано в \cite{Marletta}. Поскольку $K_{0j}(0, \cdot)\in AC[0, 2a]$, уравнение \eqref{eq_K2_minus_K1_Jost} является обращением  уравнения
$$
\psi_2(z) - \psi_1(z) = \int\limits_{0}^{2a}\left( K_{02}(0, t) - K_{01}(0, t)\right) e^{izt}dt, \ z\in \mathbb{R}.
$$

Таким образом, оценка интеграла
\begin{equation}
\label{eq_Integral}
\lim\limits_{A\to \infty}\int\limits_{-A}^{A}\left(\psi_2(x) - \psi_1(x)\right) e^{-ixt}dx
\end{equation}
влечет оценку $K_{12}(0, t)$, которая при подстановке в итерационный процесс для решения уравнения \eqref{eq_K12_integral_equation} дает требуемую оценку для $K_{12}(x, t)$.

\bigskip

{\bf 5. Оценка интеграла \eqref{eq_Integral}.}

\begin{lemma}\label{lemma_vp_estimates}
Если $u\in \mathbb{R}\setminus\{0\}$ и $\rho > 0$, то для $A > \rho$
\begin{equation}
\label{eq_int_sinc_tr}
\left|\int\limits_{\rho < |x| < A}\frac{e^{-ixu}}{x}dx\right| \le 2\pi\min\left( 1, \frac{3/\pi}{\rho|u|}\right)
\end{equation}
и
\begin{equation}\label{eq_int_sinc}
\left|\lim\limits_{A\to\infty}\int\limits_{\rho < |x| < A}\frac{e^{-ixu}}{x}dx\right| \le  \pi\min\left( 1, \frac{4/\pi}{\rho|u|}\right).
\end{equation}
\end{lemma}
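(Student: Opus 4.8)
The plan is to reduce both estimates to elementary properties of the sine integral $\mathrm{Si}(s):=\int_0^s\frac{\sin t}{t}\,dt$. Since $1/x$ is odd, pairing $x$ with $-x$ gives
\[
\int_{\rho<|x|<A}\frac{e^{-ixu}}{x}\,dx=\int_\rho^A\frac{e^{-ixu}-e^{ixu}}{x}\,dx=-2i\int_\rho^A\frac{\sin(xu)}{x}\,dx,
\]
and the substitution $t=|u|x$ turns this into $-2i\,\mathrm{sgn}(u)\bigl(\mathrm{Si}(A|u|)-\mathrm{Si}(\rho|u|)\bigr)$. Hence, writing $a:=\rho|u|>0$ and $b:=A|u|>a$,
\[
\left|\int_{\rho<|x|<A}\frac{e^{-ixu}}{x}\,dx\right|=2\,|\mathrm{Si}(b)-\mathrm{Si}(a)|,
\]
and, since $\mathrm{Si}(s)\to\pi/2$ as $s\to\infty$ (which also shows the improper integral in \eqref{eq_int_sinc} converges),
\[
\left|\lim_{A\to\infty}\int_{\rho<|x|<A}\frac{e^{-ixu}}{x}\,dx\right|=2\left|\frac\pi2-\mathrm{Si}(a)\right|.
\]

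Next I would establish two ingredients about $\mathrm{Si}$. Integration by parts gives, for $0<a<b\le\infty$,
\[
\int_a^b\frac{\sin t}{t}\,dt=\frac{\cos a}{a}-\frac{\cos b}{b}-\int_a^b\frac{\cos t}{t^2}\,dt
\]
(the middle term omitted when $b=\infty$); estimating the last integral by $\int_a^b t^{-2}\,dt=a^{-1}-b^{-1}$ yields $\left|\int_a^b\frac{\sin t}{t}\,dt\right|\le\frac2a$, and in particular $\left|\frac\pi2-\mathrm{Si}(a)\right|\le\frac2a$. For the uniform bound I would show $0\le\mathrm{Si}(s)\le\pi$ for every $s>0$: on $(0,\pi]$ this is immediate from $0\le\frac{\sin t}{t}\le1$; for $s>\pi$ write $\mathrm{Si}(s)=\mathrm{Si}(\pi)+\int_\pi^s\frac{\sin t}{t}\,dt$, bound the tail by $2/\pi$ by the same integration by parts, and note $\frac2\pi\le\mathrm{Si}(\pi)\le\frac\pi2+\frac2\pi$ (compare $\frac1t$ with $\frac1\pi$ on $(0,\pi]$ for the lower bound; split the integral at $\pi/2$ for the upper one). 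Since $\frac\pi2+\frac4\pi<\pi$, this closes the estimate. (Alternatively one may invoke the classical fact that $\mathrm{Si}$ increases on $[0,\pi]$ and its successive local extrema at the points $k\pi$ move monotonically toward $\pi/2$, so $0\le\mathrm{Si}(s)\le\mathrm{Si}(\pi)<\pi$.)

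Finally I would assemble the claims. For \eqref{eq_int_sinc_tr}: from $2|\mathrm{Si}(b)-\mathrm{Si}(a)|$, the bound $0\le\mathrm{Si}\le\pi$ gives $|\mathrm{Si}(b)-\mathrm{Si}(a)|\le\pi$, hence the estimate $2\pi\cdot1$, while $|\mathrm{Si}(b)-\mathrm{Si}(a)|\le\frac2a<\frac3a$ gives the estimate $2\pi\cdot\frac{3/\pi}{\rho|u|}=\frac6{\rho|u|}$; taking the smaller of the two proves \eqref{eq_int_sinc_tr}. For \eqref{eq_int_sinc}: $2\left|\frac\pi2-\mathrm{Si}(a)\right|\le\pi$ follows from $0\le\mathrm{Si}(a)\le\pi$, and $2\left|\frac\pi2-\mathrm{Si}(a)\right|\le\frac4a=\pi\cdot\frac{4/\pi}{\rho|u|}$ follows from the integration-by-parts tail bound, and again one takes the minimum. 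The only step that needs care is the uniform bound $0\le\mathrm{Si}(s)\le\pi$ (equivalently, that the partial sine integrals never leave $[0,\mathrm{Si}(\pi)]$); the decay half is a one-line integration by parts and the initial symmetrization is routine.
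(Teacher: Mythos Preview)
Your argument is correct. The reduction to $-2i\int_\rho^A\frac{\sin(xu)}{x}\,dx$ via the odd symmetry and the integration-by-parts estimate giving the $1/(\rho|u|)$ decay are exactly what the paper does (your bound $|\mathrm{Si}(b)-\mathrm{Si}(a)|\le 2/a$ is in fact slightly sharper than needed for the constant $6$). The genuine difference is in how the uniform bounds $2\pi$ and $\pi$ are obtained. The paper uses a contour argument: for $e^{iz|u|}/z$ one closes in the upper half-plane with a small semicircle of radius $\rho$ around the origin (and, for finite $A$, also a large semicircle of radius $A$); since there are no poles inside, the real-axis integral equals minus the semicircle contributions, each of which has modulus at most $\int_0^\pi e^{-|u|\rho\sin\varphi}\,d\varphi\le\pi$. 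Your route, establishing $0\le\mathrm{Si}(s)\le\pi$ by elementary real-variable estimates on $\mathrm{Si}(\pi)$ and the tail $\int_\pi^s$, is a perfectly valid alternative that keeps the lemma entirely within real analysis; the cost is the small side computation bounding $\mathrm{Si}(\pi)$, whereas the contour approach yields the constants without any numerical check.
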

\begin{proof}
Имеем
$$
\int\limits_{\rho < |x| < A}\frac{e^{-ixu}}{x}dx = -2 i\int\limits_{\rho}^A\frac{\sin(xu)}{x}dx = 2i\left(\frac{\cos(uA)}{uA} - \frac{\cos(u\rho)}{u\rho}\right) + 2i \ \mathrm{sign}(u)\int\limits_{|u|\rho}^{|u|A}\frac{\cos(x)}{x^2}dx,
$$
отсюда
$$
\left|\int\limits_{\rho < |x| < A}\frac{e^{-ixu}}{x}dx\right|\le 6\frac{\rho^{-1}}{|u|}, \ \left|\lim\limits_{A\to\infty }\int\limits_{\rho < |x| < A}\frac{e^{-ixu}}{x}dx\right|\le 4\frac{\rho^{-1}}{|u|}
$$
По лемме Жордана,
$$
\lim\limits_{A\to\infty}\int\limits_{\rho < |x| < A}\frac{e^{\pm ix|u|}}{x}dx + i\int\limits_{\pm\pi}^0 e^{\pm i|u|\rho e^{i\varphi}}d\varphi = 0,
$$
значит,
$$
\left|\lim\limits_{A\to\infty}\int\limits_{\rho < |x| < A}\frac{e^{-ixu}}{x}dx\right|\le \pi, \
\left|\int\limits_{\rho < |x| < A}\frac{e^{-ixu}}{x}dx\right| \le 2\pi.
$$
\end{proof}

\begin{lemma}\label{lemma_double_int_estimate}
Пусть $f\in L_{s}(0, 2a)$, $1 < s \le \infty$, $0<t<2a$. Тогда при $\rho > \frac{3}{\pi a}$
\begin{equation}
\label{eq_double_int_estimate}
\left| \lim\limits_{A\to \infty}\int\limits_{\rho < |x| < A} \frac{e^{-ixt}}{x}\int\limits_{0}^{2a} f(y)e^{iyx}\,dy\,dx \right| \le 12 \Vert f\Vert_{s}\rho^{-\frac{s-1}{s}}\left(\ln\left(\frac{\pi a e}{3}\rho\right)\right)^{\frac{s-1}{s}}
.\end{equation}
\end{lemma}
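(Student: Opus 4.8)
Here is how I would approach Lemma~\ref{lemma_double_int_estimate}.

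\medskip\noindent
The plan is to push the $x$-integration inside, reducing the double integral to a single integral of $|f|$ against the elementary kernel estimated in Lemma~\ref{lemma_vp_estimates}, and then to invoke Hölder's inequality with the conjugate exponents $s$ and $s'=s/(s-1)$. First I would apply Fubini's theorem on the \emph{bounded} region $\{\rho<|x|<A\}\times(0,2a)$, which is legitimate since $\bigl|x^{-1}e^{-ixt}f(y)e^{iyx}\bigr|\le\rho^{-1}|f(y)|$ and $f\in L_1(0,2a)$, obtaining
$$\int_{\rho<|x|<A}\frac{e^{-ixt}}{x}\int_0^{2a}f(y)e^{iyx}\,dy\,dx=\int_0^{2a}f(y)\left(\int_{\rho<|x|<A}\frac{e^{i(y-t)x}}{x}\,dx\right)dy.$$
By \eqref{eq_int_sinc_tr} (with $u=t-y$) the inner integral is bounded in absolute value by $2\pi\min\bigl(1,\tfrac{3/\pi}{\rho|y-t|}\bigr)$, uniformly in $A>\rho$. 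Setting $\delta:=\tfrac{3}{\pi\rho}$ (so $\delta<a$ by the hypothesis $\rho>\tfrac{3}{\pi a}$) and $h(y):=\min\bigl(1,\delta/|y-t|\bigr)$, I get the bound $2\pi\int_0^{2a}|f(y)|h(y)\,dy$ for every $A$, hence also for the limit $A\to\infty$; Hölder then gives $2\pi\|f\|_s\|h\|_{s'}$ with $1/s'=(s-1)/s$.

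\medskip\noindent
The decisive step is to estimate $\|h\|_{s'}$. I would split $(0,2a)$ at $|y-t|=\delta$: the part where $|y-t|\le\delta$ is an interval of length at most $2\delta\,(<2a)$ and contributes at most $2\delta$ to $\|h\|_{s'}^{s'}$; on the part where $|y-t|>\delta$ I would use the pointwise inequality $r^{-s'}\le\delta^{1-s'}r^{-1}$ (valid for $r\ge\delta$, $s'\ge1$), so that
$$\int_{\{|y-t|>\delta\}\cap(0,2a)}\Bigl(\frac{\delta}{|y-t|}\Bigr)^{s'}dy\le\delta\ln\frac{t}{\delta}+\delta\ln\frac{2a-t}{\delta}=\delta\ln\frac{t(2a-t)}{\delta^2}\le 2\delta\ln\frac{a}{\delta}$$
(valid when $\delta\le t\le 2a-\delta$; when $t$ lies within $\delta$ of $0$ or $2a$ only one piece survives and the bound is smaller still). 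This yields $\|h\|_{s'}^{s'}\le2\delta\bigl(1+\ln(a/\delta)\bigr)=2\delta\ln(ae/\delta)$, i.e.\ $\|h\|_{s'}\le\bigl(2\delta\ln(ae/\delta)\bigr)^{1/s'}$. Substituting $2\delta=\tfrac{6}{\pi\rho}$ and $ae/\delta=\tfrac{\pi ae}{3}\rho$, and noting $2\pi(6/\pi)^{1/s'}\le 2\pi\cdot\tfrac6\pi=12$ because $6/\pi>1$ and $1/s'\le1$, I would arrive at
$$\left|\lim_{A\to\infty}\int_{\rho<|x|<A}\frac{e^{-ixt}}{x}\int_0^{2a}f(y)e^{iyx}\,dy\,dx\right|\le 12\,\|f\|_s\,\rho^{-\frac{s-1}{s}}\left(\ln\Bigl(\tfrac{\pi ae}{3}\rho\Bigr)\right)^{\frac{s-1}{s}}.$$

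\medskip\noindent
The hard part is precisely this last estimate with the sharp logarithmic argument $\tfrac{\pi ae}{3}\rho$: a careless bound of the tail (replacing $|y-t|$ by its maximal value $2a$) would cost a factor $2$ inside the logarithm, so one must use the AM--GM inequality $t(2a-t)\le a^2$, and for the constant to come out as $12$ uniformly in $s\in(1,\infty]$ one has to watch the edge configurations where $t$ is close to $0$ or $2a$. The role of the assumption $\rho>\tfrac{3}{\pi a}$ is exactly to guarantee $\delta<a$, so that $\ln(a/\delta)>0$ and the central interval $\{|y-t|\le\delta\}$ does not already exhaust $(0,2a)$.
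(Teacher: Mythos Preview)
Your argument mirrors the paper's exactly: Fubini on the truncated region, the kernel bound \eqref{eq_int_sinc_tr}, H\"older, and then the elementary estimate $\int_0^{2a}\min(1,C/|y-t|)^{s'}\,dy\le\int_0^{2a}\min(1,C/|y-t|)\,dy\le 2C\ln(ae/C)$ with $C=3/(\pi\rho)$, followed by the same constant bookkeeping $2\pi(6/\pi)^{(s-1)/s}\le 12$.

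One small slip: your parenthetical ``only one piece survives and the bound is smaller still'' is not literally true. When $\delta$ is close to $a$ and $t$ is close to $0$, the single surviving tail $\delta\ln\bigl((2a-t)/\delta\bigr)$ can exceed $2\delta\ln(a/\delta)$ (take $a=1$, $\delta=0.9$, $t=0.01$). What saves you is that in this regime the central interval has length only $t+\delta<2\delta$, and the combined total $t+\delta+\delta\ln\bigl((2a-t)/\delta\bigr)$ does satisfy the bound $2\delta\ln(ae/\delta)$; this reduces to $u+\ln(2v-u)\le 1+2\ln v$ for $u=t/\delta\in[0,1)$, $v=a/\delta\ge 1$, which holds since the left side is increasing in $u$ and at $u=1$ becomes $1+\ln(2v-1)\le 1+\ln(v^2)$. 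So the conclusion stands, but the edge case needs this extra line rather than the dismissal you gave it.
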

\begin{proof}
Имеем
$$
\int\limits_{\rho < |x| < A}\frac{e^{-ixt}}{x}\int\limits_{0}^{2a}f(y)e^{ixy}\,dy\,dx =  \int\limits_0^{2a} f(y)\left[\int\limits_{\rho < |x| < A}\frac{e^{-ix(t-y)}}{x}\,dx \right]\,dy,
$$
поэтому $$
\left|\int\limits_{\rho < |x| < A}\frac{e^{-ixt}}{x}\int\limits_{0}^{2a}f(y)e^{ixy}\,dy\,dx\right|\le 2\pi\int\limits_{0}^{2a}|f(y)|\min\left( 1, \frac{3/\pi}{\rho|y-t|}\right) \,dy \le $$
$$\le 2\pi\Vert f\Vert_{s} \left( \int\limits_{0}^{2a}\min\left( 1, \frac{3/\pi}{\rho|y-t|}\right)^{\frac{s}{s-1}}\,dy\right)^{\frac{s-1}{s}}.
$$

Легко показать, что для всех $C \in (0, a]$ и $x\in [0, 2a]$ выполнено
\begin{equation}
\int\limits_0^{2a}\min\left( 1, \frac{C}{|x-y|}\right)^{\frac{s}{s-1}}\,dy \le \int\limits_0^{2a}\min\left( 1, \frac{C}{|x-y|}\right) \,dy\le \ 2C\ln\left(\frac{ae}{C}\right);
\end{equation}
взяв $C = \frac{3}{\pi \rho}$, где $\rho > \frac{3}{\pi a}$, получим, что
$$
 \int\limits_{0}^{2a}\min\left( 1, \frac{3/\pi}{\rho|y-t|}\right)^{\frac{s}{s-1}}\,dy\le \frac{6}{\pi}\rho^{-1}\log\left(\frac{\pi a e}{3}\rho\right),
$$
что и дает требуемое.
\end{proof}

Фиксируем параметр $\delta\in (0, 1)$, используемый в дальнейшем в лемме об оценке разности двух целых функций.
Разобьем интеграл \eqref{eq_Integral} на две части:
$$
\lim\limits_{A\to\infty}\int\limits_{-A}^{A}\left(\psi_2(x) - \psi_1(x)\right) e^{-ixt}dx = \left(\int\limits_{-R^{\alpha}}^{R^{\alpha}} + \int\limits_{|x| > R^{\alpha}}
\right) \left( \psi_2(x) - \psi_1(x)\right) e^{-ixt}\,dx,
$$
где $\alpha \in (0, 1 - \delta)$ --- произвольный параметр и используется обозначение
$$\lim\limits_{A\to \infty}\int\limits_{\rho < |x| < A}\varphi(x)\, dx = \int\limits_{|x| > \rho} \varphi(x) \, dx.$$

\bigskip

{\bf 5.1. Оценка интеграла $\int_{|x| > R^{\alpha}}(\psi_2(x) - \psi_1(x))e^{-ixt}dx$.}
\begin{lemma}
\label{lemma_K_1_K_2}

При $0 < t < 2a$ для всех $\alpha > 0$ и $R > \left(\frac{3}{\pi a}\right)^{\frac{1}{\alpha}}$ выполнено неравенство
\begin{equation}
\label{eq_integral_apriori}
\begin{gathered}
\left|\frac{1}{2\pi}\int\limits_{|x| > R^{\alpha}}\left(\psi_2(x) - \psi_1(x)\right) e^{-ixt}dx\right| \le  \frac{1}{2}Q_1e^{aQ_1}\min\left( 1, \frac{4/\pi}{R^{\alpha}t}\right) +\\
+ \frac{3}{\pi}\left( D_p + 4a^{\frac{1}{p}}Q_1^2e^{aQ_1}\right) R^{-\alpha\frac{p-1}{p}}\left(\ln\left(\frac{\pi a e}{3}R^{\alpha}\right)\right)^{\frac{p-1}{p}}.
\end{gathered}
\end{equation}

\end{lemma}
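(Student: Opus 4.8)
\medskip
\noindent\textbf{Sketch of the intended proof.}
The plan is to use the representation $\psi_j(x)=1+\int_{0}^{2a}K_{0j}(0,t)e^{ixt}\,dt$, $x\in\mathbb{R}$ (the support of $q_j$ in $[0,a]$ puts the support of $K_{0j}(0,\cdot)$ in $[0,2a]$), so that $\psi_2(x)-\psi_1(x)=\int_{0}^{2a}g(y)e^{ixy}\,dy$ with $g:=K_{02}(0,\cdot)-K_{01}(0,\cdot)$. The function $g$ is absolutely continuous on $[0,2a]$ and vanishes at $2a$, and from the decomposition \eqref{eq_Kernel_12} (with $q_1\equiv 0$, $q_2=q_j$, $x=0$) one gets, almost everywhere,
$$g'(y)=-\tfrac14(q_2-q_1)(y/2)+\frac{\partial H_{02}}{\partial t}(0,y)-\frac{\partial H_{01}}{\partial t}(0,y).$$
The first step is a single integration by parts, which supplies the $x^{-1}$ decay that membership $g\in L_1$ alone does not:
$$\int_{0}^{2a}g(y)e^{ixy}\,dy=-\frac{g(0)}{ix}-\frac{1}{ix}\int_{0}^{2a}g'(y)e^{ixy}\,dy.$$

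Inserting this into $\frac{1}{2\pi}\int_{|x|>R^\alpha}(\psi_2-\psi_1)(x)e^{-ixt}\,dx$ (all such integrals read as $\lim_{A\to\infty}\int_{R^\alpha<|x|<A}$) produces two terms. For the first, $-\frac{g(0)}{2\pi i}\int_{|x|>R^\alpha}\frac{e^{-ixt}}{x}\,dx$, I would invoke \eqref{eq_int_sinc} of Lemma~\ref{lemma_vp_estimates} to get the bound $\tfrac12|g(0)|\min\!\big(1,\tfrac{4/\pi}{R^\alpha t}\big)$, and then use the pointwise kernel bound from the estimates following \eqref{eq_KernelEstimates}, namely $|g(0)|\le|K_{02}(0,0)|+|K_{01}(0,0)|\le Q_1e^{aQ_1}$; this is precisely the first term of \eqref{eq_integral_apriori}. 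The second term, $-\frac{1}{2\pi i}\int_{|x|>R^\alpha}\frac{e^{-ixt}}{x}\big(\int_0^{2a}g'(y)e^{ixy}\,dy\big)\,dx$, is exactly of the shape handled by Lemma~\ref{lemma_double_int_estimate}, applied with $f=g'$, $s=p$ and $\rho=R^\alpha$ (the hypothesis $R>(3/(\pi a))^{1/\alpha}$ being just $\rho>3/(\pi a)$), which yields $\tfrac{6}{\pi}\|g'\|_p R^{-\alpha(p-1)/p}\big(\ln(\tfrac{\pi ae}{3}R^\alpha)\big)^{(p-1)/p}$. Finally one estimates $\|g'\|_{L_p(0,2a)}$: the change of variable $y=2u$ gives $\|(q_2-q_1)(\cdot/2)\|_{L_p(0,2a)}=2^{1/p}\|q_2-q_1\|_p\le 2D_p$, while $\|\partial_t H_{0j}(0,\cdot)\|_{L_p(0,2a)}\le(2a)^{1/p}\sup_{[0,2a]}|\partial_t H_{0j}(0,\cdot)|\le(2a)^{1/p}\tfrac12 Q_1^2e^{aQ_1}$ by the same list of estimates, whence $\|g'\|_p\le\tfrac12 D_p+2a^{1/p}Q_1^2e^{aQ_1}$; multiplying by $\tfrac6\pi$ produces the second term of \eqref{eq_integral_apriori}.

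I expect the only real obstacle to be the justification of these manipulations, since $\psi_2-\psi_1$ is merely the Fourier transform of an $L_1$-function and need not be absolutely integrable at infinity, so every $\int_{|x|>R^\alpha}$ must be interpreted as $\lim_{A\to\infty}\int_{R^\alpha<|x|<A}$. On a finite annulus the integration by parts above and the Fubini interchange used inside Lemma~\ref{lemma_double_int_estimate} are harmless; the existence of the relevant limits as $A\to\infty$, and the passage of both the identity and the estimate to the limit, then follow from \eqref{eq_int_sinc_tr}--\eqref{eq_int_sinc} (the inner $x$-integrals are bounded by $2\pi$ uniformly in $A$ and converge for a.e.\ $y$) together with dominated convergence, using $g'\in L_1(0,2a)$. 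Everything else is the bookkeeping of constants indicated above.
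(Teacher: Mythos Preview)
Your proposal is correct and follows essentially the same route as the paper: an integration by parts in the representation $\psi_j(x)=1+\int_0^{2a}K_{0j}(0,t)e^{ixt}\,dt$, followed by Lemma~\ref{lemma_vp_estimates} for the boundary term and Lemma~\ref{lemma_double_int_estimate} for the integral term, with the same kernel estimates to control $|g(0)|$ and $\|g'\|_p$. The only cosmetic difference is that the paper names the function $h=-4g'$ and works with $h$ rather than with $g'$ directly; your handling of the $A\to\infty$ limits via \eqref{eq_int_sinc_tr}--\eqref{eq_int_sinc} and dominated convergence is a welcome clarification that the paper leaves implicit.
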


{\bf Доказательство.}
Поскольку
\begin{equation}
\label{eD_psi_represent1}
\psi_j(z) = 1 + \int\limits_{0}^{2a} K_{0j}(0, t)e^{izt}dt = 1 + \frac{i}{z}K_{0j}(0, 0) -\frac{i}{4z}\int\limits_{0}^{2a}g_j(t)e^{izt}dt,
\end{equation}
где
$$
g_j(t) = q_j\left(\frac{t}{2}\right)  - 4\frac{\partial H_{0j}}{\partial t}(0, t),
$$
то
$$\int\limits_{|x| > R^{\alpha}}\left(\psi_2(x) - \psi_1(x)\right) e^{-ixt}\,dx = \int\limits_{|x| > R^{\alpha}} e^{-ixt}\left( (K_{02} - K_{01})(0, 0)\frac{i}{x} - \frac{i}{4x}\int\limits_{0}^{2a} h(y)e^{ixy}\,dy\right) \,dx,
$$
где $h(y) = g_2(y) - g_1(y)$.

Из оценки \eqref{eq_int_sinc} с учетом $$
|K_{0j}(0, 0)|\le \frac{1}{2}Q_{1}e^{aQ_{1}},
$$ следует, что
при $t \in (0, 2a)$
$$\left|(K_{02}-K_{01})(0, 0)\int\limits_{|x| > R^{\alpha}}\frac{e^{-ixt}}{x}\,dx\right| \le \pi Q_{1}e^{aQ_1}\min \left( 1, \frac{4/\pi}{R^{\alpha}t}\right).
$$

Но из оценок \eqref{eq_KernelEstimates} следует $$
\Vert h\Vert_{p}\le 2^{\frac{1}{p}}\Vert q_2 - q_1\Vert_{p} + 4\left\Vert \frac{\partial H_{02}}{\partial t}(0, \cdot) - \frac{\partial H_{01}}{\partial t}(0, \cdot) \right\Vert_{p} \le 2^{\frac{1}{p}} D_p + 8a^{\frac{1}{p}}Q_1^2e^{aQ_1} \le 2D_p + 8a^{\frac{1}{p}}Q_1^2e^{aQ_1}.
$$
Значит, по Лемме \ref{lemma_double_int_estimate}, при $R > \left(\frac{3}{\pi a}\right)^{\frac{1}{\alpha}}$
\begin{equation}
\label{eq_intergral_estimate}
\begin{gathered}
\left|\int\limits_{R^{\alpha} < |x| < A}\frac{e^{-ixt}}{x}\int\limits_{0}^{2a}h(y)e^{ixy}\,dy\,dx\right|\le 24(D_p + 4a^{\frac{1}{p}}Q_1^2e^{aQ_1})R^{-\alpha\frac{p-1}{p}}\left(\ln\left(\frac{\pi a  e }{3}R^{\alpha}\right)\right)^{\frac{p-1}{p}}
\end{gathered}
\end{equation}

Подставляя полученные оценки в выражение для $\int\limits_{|x| > R^{\alpha}}(\psi_2(x)-\psi_1(x))e^{-ixt}dx$, получаем требуемое неравенство.

\bigskip

{\bf 5.2. Оценка интеграла $\int_{-R^{\alpha}}^{R^{\alpha}}(\psi_2(x) - \psi_1(x))e^{-ixt}dx$.}

\begin{lemma}
\label{lemma_MarchenkoEstimate}
Для $q\in L_1((0, \infty), xdx)$ при всех $z\in \overline{\mathbb{C}}_+$ имеют место оценки
\begin{equation}
\label{eq_MarchenkoEstimate}
|\psi_q(z) - 1|\le \frac{1}{|z|}\sigma_q(0)\exp\left(\overline{\sigma}_q(0)\right), \ |\psi_q(z)|\le \exp\left(\overline{\sigma}_q(0)\right).
\end{equation}
В частности, если $q\in L_1(0, a)$, то при $z\in \overline{\mathbb{C}}_+$
\begin{equation}
\label{eq_Marchenko_estimate}
|\psi_q(z)|\le e^{aQ_1}, \ \left|\psi_q(z) - 1\right|\le \frac{1}{|z|}Q_1e^{aQ_1},
\end{equation}
и, следовательно, $$Z(\psi)\cap \overline{\mathbb{C}}_+\subset \overline{B}(0, Q_1e^{aQ_1}).$$
\end{lemma}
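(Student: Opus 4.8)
The plan is to reduce everything to the Faddeev--Marchenko function $m_q(x,z):=e^{-izx}y_q(x,z)$ and to run the standard Volterra/Neumann iteration, the only real point being a sharp combinatorial bound on the iterates. Since $y_q(\cdot,z)$ is the Jost solution of \eqref{eq_Schroedinger}, dividing the classical integral equation for the Jost solution by $e^{izx}$ shows that $m_q$ solves the Volterra equation
$$
m_q(x,z) = 1 + \int_x^\infty g_z(t-x)\,q(t)\,m_q(t,z)\,dt, \qquad g_z(s):=\int_0^s e^{2izu}\,du = \frac{e^{2izs}-1}{2iz},
$$
on $[0,\infty)$ (the precise sign in front of the integral is irrelevant below). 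The elementary but crucial observation is that for $z\in\overline{\mathbb{C}}_+$ and $s\ge 0$ one has $|e^{2izu}|=e^{-2u\,\im z}\le 1$ for $u\in[0,s]$, so that simultaneously $|g_z(s)|\le s$ (integrating the bound $1$) and $|g_z(s)|\le |e^{2izs}-1|/(2|z|)\le 1/|z|$; hence
$$
|g_z(s)| \le \min\!\Big(s,\ \tfrac1{|z|}\Big),\qquad z\in\overline{\mathbb{C}}_+,\ s\ge 0.
$$

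Next I would write $m_q=\sum_{n\ge 0}m_n$ with $m_0\equiv 1$ and $m_{n+1}(x,z)=\pm\int_x^\infty g_z(t-x)q(t)m_n(t,z)\,dt$; unfolding the recursion represents $m_n(x,z)$ as an $n$-fold integral over the simplex $x\le t_1\le\dots\le t_n<\infty$ with integrand $\pm\prod_{i=1}^n g_z(t_i-t_{i-1})q(t_i)$ (with $t_0:=x$). Bounding each factor by $|g_z(t_i-t_{i-1})|\le t_i-t_{i-1}\le t_i-x$ on the simplex makes the majorant $\prod_{i=1}^n(t_i-x)|q(t_i)|$ symmetric, so the simplex integral is $1/n!$ times the integral over the whole cube, giving
$$
|m_n(x,z)| \le \int_{x\le t_1\le\dots\le t_n}\prod_{i=1}^n(t_i-x)|q(t_i)|\,dt_1\cdots dt_n = \frac1{n!}\Big(\int_x^\infty(t-x)|q(t)|\,dt\Big)^n = \frac{\overline{\sigma}_q(x)^n}{n!}.
$$
Summing over $n$ yields $|\psi_q(z)|=|m_q(0,z)|\le e^{\overline{\sigma}_q(0)}$. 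For the sharper estimate of $\psi_q-1$ I would, in the same $n$-fold integral, bound only the \emph{first} factor by $|g_z(t_1-x)|\le 1/|z|$ and the remaining ones by $|g_z(t_i-t_{i-1})|\le t_i-t_1$ ($i\ge 2$); integrating out $t_2,\dots,t_n$ over the simplex based at $t_1$ and using monotonicity $\overline{\sigma}_q(t_1)\le\overline{\sigma}_q(x)$ for $t_1\ge x$ gives, for $n\ge 1$,
$$
|m_n(x,z)| \le \frac1{|z|\,(n-1)!}\int_x^\infty |q(t_1)|\,\overline{\sigma}_q(t_1)^{n-1}\,dt_1 \le \frac{\sigma_q(x)\,\overline{\sigma}_q(x)^{n-1}}{|z|\,(n-1)!},
$$
and summing over $n\ge 1$ gives $|m_q(x,z)-1|\le |z|^{-1}\sigma_q(x)\,e^{\overline{\sigma}_q(x)}$; at $x=0$ this is \eqref{eq_MarchenkoEstimate}.

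Finally, the particular case is immediate: if $\textbf{supp}(q)\subset[0,a]$ and $\|q\|_1\le Q_1$, then $\sigma_q(0)=\|q\|_1\le Q_1$ and $\overline{\sigma}_q(0)=\int_0^a t|q(t)|\,dt\le a\|q\|_1\le aQ_1$, which turns \eqref{eq_MarchenkoEstimate} into \eqref{eq_Marchenko_estimate}; and if $z\in\overline{\mathbb{C}}_+$ satisfies $\psi_q(z)=0$, then $1=|\psi_q(z)-1|\le |z|^{-1}Q_1e^{aQ_1}$, i.e.\ $|z|\le Q_1e^{aQ_1}$, so $Z(\psi)\cap\overline{\mathbb{C}}_+\subset\overline{B}(0,Q_1e^{aQ_1})$. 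The only genuinely delicate point is the sharp simplex estimate: one must check that replacing each increment $t_i-t_{i-1}$ by $t_i-x$ (resp.\ by $t_i-t_1$) is legitimate precisely because of the ordering, and that the resulting majorant is symmetric, so the simplex integral equals $(1/n!)$ (resp.\ $(1/(n-1)!)$) of a pure power — this is exactly what produces the exponential constant $e^{\overline{\sigma}_q(0)}$ rather than a weaker one. One should also recall from \cite{Marchenko} that $q\in L_1((0,\infty),x\,dx)$ guarantees existence of $y_q(\cdot,z)$, validity of the above Volterra equation, and continuity of $\psi_q$ on $\overline{\mathbb{C}}_+$; convergence of the Neumann series (and hence its coincidence with $m_q$) follows from the bounds just derived.
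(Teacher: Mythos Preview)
Your proof is correct and is essentially the same approach as the paper's, which simply quotes the Marchenko estimates
\[
|y_q(x,z)|\le\exp\!\bigl(-\im(z)x+\overline{\sigma}_q(x)\bigr),\qquad
|y_q(x,z)-e^{izx}|\le\Bigl(\overline{\sigma}_q(x)-\overline{\sigma}_q\bigl(x+\tfrac1{|z|}\bigr)\Bigr)\exp\!\bigl(-\im(z)x+\overline{\sigma}_q(x)\bigr)
\]
from \cite[Lemma~3.1.3]{Marchenko}, evaluates at $x=0$, and uses $\overline{\sigma}_q(0)-\overline{\sigma}_q(1/|z|)=\int_0^{1/|z|}\sigma_q(s)\,ds\le\sigma_q(0)/|z|$. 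What you have written out is precisely Marchenko's Volterra iteration that underlies that lemma, so your argument is a self-contained reproduction of the cited result rather than a different route; the only cosmetic difference is that you obtain the factor $\sigma_q(0)/|z|$ directly from the bound $|g_z(s)|\le 1/|z|$ on the first kernel, while the paper passes through the intermediate quantity $\overline{\sigma}_q(0)-\overline{\sigma}_q(1/|z|)$.
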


{\bf Доказательство.}
Оценки \eqref{eq_MarchenkoEstimate} следуют из неравенств \cite[Лемма 3.1.3]{Marchenko}
$$
|y_q(x, z)|\le \exp\left(-\im(z)x + \overline{\sigma}_q(x)\right),
$$
$$
|y_q(x, z) - e^{izx}|\le \left(\overline{\sigma}_q(x) - \overline{\sigma}_q\left(x + \frac{1}{|z|}\right)\right)\exp\left(-\im(z)x + \overline{\sigma}_q(x)\right),
$$
верных для $x\in [0, \infty), z\in \overline{\mathbb{C}}_+$.

По теореме Адамара, поскольку порядок $\psi_j$ не превосходит 1,

$$
\psi_j(z) = z^{N(0)}e^{g_j(z)}\prod_{n=1}^{\infty}E\left(\frac{z}{z_n^{(j)}}\right),
$$
где
$$
E(w) = (1-w)e^{w}, \ g_j(z) = a_jz + b_j,
$$
а $\{z_n^{(j)}\}_{n=1}^{\infty} = Z(\psi_j)\setminus \{0\}$ --- последовательность отличных от нуля корней (с учетом кратности) функции $\psi_j$; по предположению, $Z(\psi_j)\cap\overline{B}(0, R)\setminus\{0\} = \{z_n^{(j)}\}_{n=1}^{N(R)}$.

Введем функции
$$\Pi_j(R, z) := \prod_{|z_n^{(j)}| >  R}E\left(\frac{z}{z_n^{(j)}}\right), \ W(z):= \prod\limits_{n=1}^{N(R)}\frac{E\left( z /z_n^{(2)}\right)}{E\left( z / z_n^{(1)}\right)}$$

Представим разность $\psi_2 - \psi_1$ в виде
$$
\psi_2(z) - \psi_1(z) = \psi_2(z)\left( 1 - \frac{1}{W(z)}e^{g_1(z) - g_2(z)}\frac{\Pi_1(R, z)}{\Pi_2(R, z)}\right) =
$$
$$
=\psi_2(z)\left(\left( 1 - e^{g_1(z) - g_2(z)}\frac{\Pi_1(R, z)}{\Pi_2(R, z)}\right) + e^{g_1(z) - g_2(z)}\frac{\Pi_1(R, z)}{\Pi_2(R, z)}\left( 1 - \frac{1}{W(z)}\right)\right) =
$$
$$
= \psi_1(z)\left( W(z) - 1\right) + \psi_2(z)\left( 1 - e^{g_1(z) - g_2(z)}\frac{\Pi_1(R, z)}{\Pi_2(R, z)}\right).
$$
Согласно лемме \ref{lemma_MarchenkoEstimate}, при $z\in \overline{\mathbb{C}}_+$ верны оценки
$$
|\psi_j(z)|\le e^{aQ_1}, \ |\psi_2(z) - \psi_1(z)|\le \frac{2Q_1e^{aQ_1}}{|z|},
$$ поэтому для оценки интеграла
$\int\limits_{-R^{\alpha}}^{R^{\alpha}}(\psi_2(x) - \psi_1(x))e^{-ixt}dx$
достаточно оценить функции
$1 - e^{g_1(z)-g_2(z)}\Pi_1(R, z)/\Pi_2(R, z)$ и $W(z) - 1$ на пути интегрирования, в качестве которого (вместо отрезка $[-R^{\alpha}, R^{\alpha}]$) возьмем контур
$$
\Gamma(A, R^{\alpha}) = \left[-R^{\alpha}, -R^{\alpha} + iA\right]\cup \left[-R^{\alpha} + Ai, R^{\alpha} + Ai\right] \cup \left[R^{\alpha} + Ai, R^{\alpha}\right], \ A:= 1 + Q_1e^{aQ_1}.
$$
Для всех $z\in \left[-R^{\alpha} + Ai, R^{\alpha} + Ai\right]$ выполнено неравенство
\begin{equation}
\label{eq_on_contour}
|z - z_n^{(j)}| \ge 1,
\end{equation}
которое будет использоваться при оценке $|W(z) - 1|$.

\begin{lemma}
\label{lemma_main_analytic}
 Для любых $\delta\in (0, 1), \eta > 0$ существуют такое $R_1 > 0$ зависящее от $\delta$, $Q_1$, $a$ и $\eta$, что для $z\in B(0, R^{1-\delta})$ выполнено неравенство
 \begin{equation}
 \left| 1 - e^{g_1(z) - g_2(z)}\frac{\Pi_1(R, z)}{\Pi_2(R, z)} \right|\le \eta R^{-(1-\delta)},
 \end{equation}
  при условии, что $R > R_1$.

Если дополнительно потребовать выполнение равенства
$$
\int\limits_{0}^aq_1(s)ds = \int\limits_0^aq_2(s)ds,
$$
то найдется такое $R_2 = R_2(\delta, Q_1, a, \eta)$, что для $z\in B(0, R^{1-\delta})$
$$
\left| 1- e^{g_1(z) - g_2(z)}\frac{\Pi_1(R, z)}{\Pi_2(R, z)} \right|\le \eta R^{-2(1-\delta)},
$$
если $R > R_2$.
\end{lemma}

{\bf Доказательство.}
Для потенциала $q\in L_1(0, a)$ такого, что $\Vert q\Vert_1\le Q_1$, при всех $z\in \mathbb{C}$ имеем
$$
\left|\psi_q(z) - 1\right| = \left| \int\limits_{0}^{2a}K_q(0, t)e^{izt}dt\right|\le \frac{1}{2}Q_1e^{aQ_1}\int\limits_{0}^{2a}e^{-\im(z)t}dt, $$$$|\psi_q(z)|\le 1 + aQ_1e^{aQ_1}e^{2a|z|}\le (1 + aQ_1e^{aQ_1})e^{2a|z|},
$$ а при $x > 0$
$$
|\psi_q(ix) - 1|\le \frac{1}{2}Q_1e^{aQ_1}\frac{1}{x}.
$$
Значит, класс функций Йоста для $\Vert q\Vert_1\le Q_1$ удовлетворяет условиям Теоремы 1 \cite{GeyntsShkalikov} со следующими параметрами: $r_0 > 0$ любое, $C_0 = 1 + aQ_1e^{aQ_1}$, $C_1 = \frac{1}{2}Q_1e^{aQ_1}$, $\rho = 1$, $\sigma = 2a$, $\varphi = \frac{\pi}{2}$; отсюда следует первое утверждение леммы. Второе утверждение следует из Замечания 6 работы \cite{GeyntsShkalikov}.

\begin{lemma}
\label{lemma_N_estimate}
Пусть $q\in L_1(0, a)$ и $\Vert q\Vert_{1}\le Q_1$. Тогда при
$$R \ge R_3(a, Q_1) := \frac{1}{ae}\left(aQ_1(1 + 2e^{aQ_1}(e + 1)) + \ln(2(aQ_1 + 1))\right)$$ выполнено неравенство
\begin{equation}
N_{\psi_q}(R)\le 3aeR.
\end{equation}
\end{lemma}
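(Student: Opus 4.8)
The plan is to bound $N_{\psi_q}(R)$ by a Jensen-type zero-counting argument applied to $\psi_q$, which is entire since $\mathrm{supp}\,q\subset[0,a]$ is compact and $\psi_q(z)=1+\int_0^{2a}K_q(0,t)e^{izt}\,dt$. I would use the standard consequence of Jensen's formula: if $f$ is analytic in $\overline{B}(z_0,R')$ with $f(z_0)\ne 0$ and $0<r<R'$, then the number $n(z_0,r)$ of zeros of $f$ in $\overline{B}(z_0,r)$ (with multiplicity) satisfies
$$
n(z_0,r)\ln\frac{R'}{r}\le\int_0^{R'}\frac{n(z_0,s)}{s}\,ds=\frac1{2\pi}\int_0^{2\pi}\ln\bigl|f(z_0+R'e^{i\theta})\bigr|\,d\theta-\ln|f(z_0)|\le\ln\frac{\max_{|z-z_0|=R'}|f(z)|}{|f(z_0)|}.
$$
The case $Q_1=0$ is trivial ($q=0$ a.e., $\psi_q\equiv1$, $N_{\psi_q}(R)=0$), so assume $Q_1>0$ and set $\lambda:=2Q_1e^{aQ_1}$, $z_0:=i\lambda$. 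Since $\overline{B}(0,R)\subset\overline{B}(i\lambda,R+\lambda)$ we get $N_{\psi_q}(R)\le n(i\lambda,R+\lambda)$, and I would take the outer radius $R':=e(R+\lambda)$, so that $\ln(R'/(R+\lambda))=1$ and hence $N_{\psi_q}(R)\le\ln\max_{|z-i\lambda|=R'}|\psi_q(z)|-\ln|\psi_q(i\lambda)|$.

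For the lower bound: $i\lambda\in\overline{\mathbb C}_+$ and $|i\lambda|=\lambda$, so \eqref{eq_Marchenko_estimate} gives $|\psi_q(i\lambda)-1|\le Q_1e^{aQ_1}/\lambda=\tfrac12$, whence $|\psi_q(i\lambda)|\ge\tfrac12$ (in particular $\psi_q(i\lambda)\ne0$). For the upper bound, from $\psi_q(z)=1+\int_0^{2a}K_q(0,t)e^{izt}\,dt$ and $|K_q(0,t)|\le\tfrac12 Q_1e^{aQ_1}$ (a consequence of \eqref{eq_KernelEstimates}) one gets, for every $z\in\mathbb C$,
$$
|\psi_q(z)|\le 1+\tfrac12 Q_1e^{aQ_1}\int_0^{2a}e^{-t\,\im z}\,dt\le 1+aQ_1e^{aQ_1}e^{2a(\im z)_-}\le\bigl(1+aQ_1e^{aQ_1}\bigr)e^{2a(\im z)_-},
$$
where $(\im z)_-=\max(0,-\im z)$. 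On the circle $|z-i\lambda|=R'$ one has $(\im z)_-\le R'-\lambda=eR+(e-1)\lambda$, so $\max_{|z-i\lambda|=R'}|\psi_q(z)|\le(1+aQ_1e^{aQ_1})\exp\bigl(2a(eR+(e-1)\lambda)\bigr)$.

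Combining the two bounds and inserting $\lambda=2Q_1e^{aQ_1}$ yields
$$
N_{\psi_q}(R)\le 2aeR+4a(e-1)Q_1e^{aQ_1}+\ln\bigl(2(1+aQ_1e^{aQ_1})\bigr).
$$
It then remains to verify the purely elementary inequality $4a(e-1)Q_1e^{aQ_1}+\ln(2(1+aQ_1e^{aQ_1}))\le aeR$ for every $R\ge R_3$; since $aeR_3=aQ_1+2aQ_1e^{aQ_1}(e+1)+\ln(2(aQ_1+1))$, this reduces, after cancellation, to $\ln\frac{1+aQ_1e^{aQ_1}}{1+aQ_1}\le aQ_1+(6-2e)aQ_1e^{aQ_1}$, which holds because $1+aQ_1e^{aQ_1}\le(1+aQ_1)e^{aQ_1}$ bounds the left-hand side by $aQ_1$, while $6>2e$ makes the right-hand side at least $aQ_1$. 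This gives $N_{\psi_q}(R)\le 3aeR$. The only delicate point is the constant bookkeeping: pushing the coefficient of $R$ down to exactly $3ae$ forces the choices $R'/(R+\lambda)=e$ and the use of the sharp exponent $(\im z)_-$ in the growth bound (the bound $e^{2a|z|}$ would be too lossy); everything else is routine.
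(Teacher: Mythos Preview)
Your argument is correct and is precisely the Jensen-formula computation that the paper alludes to (the paper gives no details, stating only that the proof follows from a direct application of Jensen's formula together with the reasoning of \cite{GeyntsShkalikov}). The choice of center $z_0=i\lambda$ with $\lambda=2Q_1e^{aQ_1}$ to secure $|\psi_q(z_0)|\ge\tfrac12$, the ratio $R'/(R+\lambda)=e$, and the use of the sharper bound $|\psi_q(z)|\le(1+aQ_1e^{aQ_1})e^{2a(\im z)_-}$ rather than $e^{2a|z|}$ are exactly the right moves to land on the stated constant $3ae$ and threshold $R_3$; the final elementary inequality is checked correctly.
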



Доказательство этой леммы основано на применении формулы Йенсена и аналогично доказательству утверждения работы \cite{GeyntsShkalikov}.

\begin{lemma}
\label{lemma_K_2_minus_K_1_estimate}
Для каждого $\alpha\in (0, 1 - \delta)$ существует такое $R_4 = R_4(\delta, Q_1, a, \alpha)$, что для всех $t\in (0, 2a)$ при условии $R > R_4$  выполнено
\begin{equation}
\label{eq_K_2_minus_K_1_estimate}
|K_{02}(0, t) - K_{01}(0, t)|\le \frac{1}{2}Q_1e^{aQ_1}\min\left( 1, \frac{4}{\pi R^{\alpha}t}\right) + \widehat{\Phi}
\end{equation}
где
$$\widehat{\Phi} = \Phi + \Phi_{\varepsilon},$$
$$\Phi:=\frac{3}{\pi}\left( D_p + 4a^{\frac{1}{p}}Q_1^2e^{aQ_1}\right) R^{-\alpha\frac{p-1}{p}}\left(\ln\left(\frac{\pi a e}{3}R^{\alpha}\right)\right)^{\frac{p-1}{p}} + R^{\alpha - 1 + \delta} + $$
$$ + \frac{2(1 + Q_1e^{aQ_1})Q_1}{\pi}e^{a(Q_1 + 2(1 + Q_1e^{aQ_1}))}R^{-\alpha},
$$
$$\Phi_{\varepsilon} = \frac{2}{\pi}e^{2a(1 + Q_1e^{aQ_1})}R^{\alpha}\varphi_{\alpha}(R, \varepsilon),
$$
где $\varphi_{\alpha}$ определено формулой \eqref{eq_varphi}.
\end{lemma}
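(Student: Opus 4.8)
The plan is to start from the representation
$$
K_{02}(0,t)-K_{01}(0,t)=\lim_{A\to\infty}\frac{1}{2\pi}\int_{-A}^{A}\bigl(\psi_2(x)-\psi_1(x)\bigr)e^{-ixt}\,dx
$$
of \eqref{eq_K2_minus_K1_Jost} and to split the integral at the frequency $|x|=R^{\alpha}$. The ``high'' part $\tfrac1{2\pi}\int_{|x|>R^{\alpha}}(\psi_2-\psi_1)e^{-ixt}\,dx$ is estimated verbatim by Lemma \ref{lemma_K_1_K_2}: it contributes exactly the term $\tfrac12 Q_1e^{aQ_1}\min(1,\tfrac{4}{\pi R^{\alpha}t})$ together with the first (the $D_p$-)summand of $\Phi$. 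Hence the whole issue is the ``low'' part $\tfrac1{2\pi}\int_{-R^{\alpha}}^{R^{\alpha}}(\psi_2-\psi_1)e^{-ixt}\,dx$, and this is where the a priori information about the resonances enters. Since $\alpha<1-\delta$, for $R$ large the whole contour used below lies inside $B(0,R^{1-\delta})$, where Lemma \ref{lemma_main_analytic} applies.

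On $[-R^{\alpha},R^{\alpha}]$ I use the decomposition obtained above,
$$
\psi_2(z)-\psi_1(z)=\psi_1(z)\bigl(W(z)-1\bigr)+\psi_2(z)\Bigl(1-e^{g_1(z)-g_2(z)}\tfrac{\Pi_1(R,z)}{\Pi_2(R,z)}\Bigr),
$$
and treat the two terms separately. The second (``$\Pi$-'') term I integrate directly over the real segment: by Lemma \ref{lemma_MarchenkoEstimate} one has $|\psi_2(x)|\le e^{aQ_1}$ on $\mathbb{R}$, while the bracket is $\le\eta R^{-(1-\delta)}$ there by Lemma \ref{lemma_main_analytic}; multiplying by the length $2R^{\alpha}$ and choosing $\eta$ small enough (depending only on $a$ and $Q_1$) produces the summand $R^{\alpha-1+\delta}$ of $\Phi$, with a little slack to spare. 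For the first (``$W$-'') term I observe that $\psi_1(z)\bigl(W(z)-1\bigr)=\psi_2(z)e^{g_1(z)-g_2(z)}\Pi_1(R,z)/\Pi_2(R,z)-\psi_1(z)$ is entire --- the zeros $z_n^{(1)}$, $n\le N(R)$, of $\psi_1$ being cancelled by the poles of $W$ --- and I deform the integral by Cauchy's theorem from $[-R^{\alpha},R^{\alpha}]$ onto the three-segment contour $\Gamma(A,R^{\alpha})$ with $A=1+Q_1e^{aQ_1}$; along $\Gamma$ one has $|e^{-izt}|\le e^{At}\le e^{2aA}$ since $0<t<2a$.

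On the two vertical sides of $\Gamma$ I write $\psi_1(W-1)=(\psi_2-\psi_1)+\psi_2\bigl(e^{g_1-g_2}\Pi_1/\Pi_2-1\bigr)$: by Lemma \ref{lemma_MarchenkoEstimate} the first summand is $\le 2Q_1e^{aQ_1}/|z|\le 2Q_1e^{aQ_1}R^{-\alpha}$ (since $|z|\ge R^{\alpha}$ there), and the second is $\le\eta e^{aQ_1}R^{-(1-\delta)}$, of lower order; multiplying by the total length $2A$, by $e^{2aA}$ and by $\tfrac1{2\pi}$ reproduces precisely the last summand $\tfrac{2(1+Q_1e^{aQ_1})Q_1}{\pi}e^{a(Q_1+2(1+Q_1e^{aQ_1}))}R^{-\alpha}$ of $\Phi$ (the $\eta$-remainder being absorbed, for $R$ large, into the $R^{\alpha-1+\delta}$-slack above). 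On the horizontal top, where $|z-z_n^{(j)}|\ge1$ by \eqref{eq_on_contour}, I estimate $|W(z)-1|$ from the resonance nearness \eqref{eq_nearness}: writing each factor as
$$
\frac{E(z/z_n^{(2)})}{E(z/z_n^{(1)})}=\frac{1-z/z_n^{(2)}}{1-z/z_n^{(1)}}\exp\!\bigl(z(1/z_n^{(2)}-1/z_n^{(1)})\bigr)
$$
and using $|1/z_n^{(2)}-1/z_n^{(1)}|<\varepsilon$, $|z|\le 2R^{\alpha}$ on $\Gamma$ and $|1-z/z_n^{(1)}|\ge|z_n^{(1)}|^{-1}\ge R^{-1}$, one gets that each of the $N(R)$ factors differs from $1$ by $O(\varepsilon R^{1+\alpha})$; since $N(R)\le 3aeR$ by Lemma \ref{lemma_N_estimate}, it follows that $|W(z)-1|\le\exp(O(\varepsilon R^{2+\alpha}))-1$, which is the function $\varphi_{\alpha}(R,\varepsilon)$ of \eqref{eq_varphi}, and multiplying by $e^{aQ_1}$ (for $|\psi_1|$), by the length $2R^{\alpha}$, by $e^{2aA}$ and by $\tfrac1{2\pi}$ gives $\Phi_{\varepsilon}$. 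Collecting the three low-frequency contributions, adding the high-frequency estimate, and taking $R_4$ larger than all thresholds invoked along the way (those of Lemmas \ref{lemma_main_analytic}, \ref{lemma_N_estimate} and \ref{lemma_K_1_K_2}, and the condition $2R^{\alpha}<R^{1-\delta}$), gives \eqref{eq_K_2_minus_K_1_estimate}.

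The main obstacle is the top-of-$\Gamma$ estimate of $|W(z)-1|$, i.e.\ the construction of $\varphi_{\alpha}$ in \eqref{eq_varphi}: one must track how the pointwise resonance error $\varepsilon$ propagates through a product of $\sim R$ factors, each of which can be as large as $O(\varepsilon R^{1+\alpha})$ because the only available lower bound for the denominators is $|1-z/z_n^{(1)}|\ge R^{-1}$, and keep the constants explicit, while at the same time fixing $\eta$ in Lemma \ref{lemma_main_analytic} small enough that all the $R^{-(1-\delta)}$-order leftovers are dominated by the displayed terms. The contour deformation and the remaining estimates are routine once one notes that $\psi_1(W-1)$ is entire.
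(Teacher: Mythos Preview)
Your proof is correct and follows essentially the same approach as the paper: split at $|x|=R^{\alpha}$, invoke Lemma~\ref{lemma_K_1_K_2} for the tail, and on the low-frequency part use the decomposition into $\psi_1(W-1)$ and $\psi_2\bigl(1-e^{g_1-g_2}\Pi_1/\Pi_2\bigr)$, deforming to $\Gamma(A,R^{\alpha})$ and estimating $|W-1|$ on the horizontal top via \eqref{eq_nearness} and Lemma~\ref{lemma_N_estimate}. The only cosmetic difference is that the paper deforms the \emph{whole} integral of $\psi_2-\psi_1$ (obviously entire) to $\Gamma$ and applies the decomposition only on the top segment, whereas you split first on $\mathbb{R}$, keep the $\Pi$-term there, and deform only $\psi_1(W-1)$---forcing you to rewrite it back as $(\psi_2-\psi_1)+\psi_2(e^{g_1-g_2}\Pi_1/\Pi_2-1)$ on the vertical legs to recover the Marchenko bound; the paper's route is marginally more direct but the content is identical.
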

{\bf Доказательство.}
Оценим
$$
\frac{1}{2\pi}\int\limits_{\Gamma(A, R^{\alpha})}\left(\psi_2 - \psi_1\right)(z) e^{-izt}dz = \frac{1}{2\pi}\left(\int\limits_{-R^{\alpha} + Ai}^{R^{\alpha} + Ai} + \int\limits_{-R^{\alpha}}^{-R^{\alpha} + Ai} + \int\limits_{R^{\alpha} + Ai}^ {R^{\alpha}}\right) \left(\psi_2 - \psi_1\right)(z) e^{-izt}dz.
$$
Имеем
$$
\frac{1}{2\pi}\left|\int\limits_{\pm R^{\alpha}}^{\pm R^{\alpha} + Ai}(\psi_2(z) - \psi_1(z))e^{-izt}dz\right|\le \frac{1}{2\pi}A\frac{2Q_1e^{aQ_1}}{R^{\alpha}}e^{2aA} = \frac{AQ_1e^{aQ_1 + 2aA}}{\pi R^{\alpha}}.
$$
Далее, взяв $\eta = \frac{\pi}{2}e^{-2aA}$ в лемме \ref{lemma_main_analytic}, получим при $R > \max\left\{ R_1 = R_1(\delta, Q_1, a, \eta), \exp\left(\frac{\ln(2)}{1-\delta - \alpha}\right)\right\{$
$$
\frac{1}{2\pi}\left|\int\limits_{-R^{\alpha} + Ai}^{R^{\alpha} + Ai}(\psi_2 - \psi_1)(z)e^{-izt}dz\right|\le \frac{1}{2\pi}\int\limits_{-R^{\alpha} + Ai}^{R^{\alpha} +Ai}|\psi_2(z)|\left|1 - e^{g_1(z) - g_2(z)}\frac{\Pi_1(R, z)}{\Pi_2(R, z)}\right| e^{\im(z)t}dz +
$$
$$
+\frac{1}{2\pi}\int\limits_{-R^{\alpha} + Ai}^{R^{\alpha} + Ai}|\psi_1(z)||W(z) - 1|e^{\im(z)t}dz \le R^{\alpha - 1 + \delta} + \frac{1}{\pi}e^{2aA}\int\limits_{-R^{\alpha} + Ai}^{R^{\alpha} + Ai}|W(z)-1|dz.
$$
Требуется оценить при $z\in [-R^{\alpha} + Ai, R^{\alpha} + Ai]$ величину
$
W(z) - 1 = U(z)e^{S(z)} - 1$, где
$$
U(z) = \prod\limits_{n=1}^{N(R)}(1 + u_n), \ u_n = zz_n^{(1)}\left(\frac{1}{z_n^{(1)}}- \frac{1}{z_n^{(2)}}\right)\frac{1}{z_n^{(1)} - z},$$
$$
S(z) = z\sum\limits_{n = 1}^{N(R)}\left(\frac{1}{z_n^{(2)}} - \frac{1}{z_n^{(1)}}\right).
$$
Имеем
$$
|U(z)e^{S(z)} - 1|\le |U(z) - 1|e^{|S(z)|} + |e^{S(z)} - 1|\le |U(z) - 1|e^{|S(z)|} + |S(z)|e^{|S(z)|}.
$$
Т.к. при $R^{\alpha} > A$ для $z\in [-R^{\alpha} + Ai, R^{\alpha} + Ai]$ выполнено $|z|\le 2R^{\alpha}$ и $|z_n^{(j)} - z|\ge 1$, то
$
|u_n|\le 2R^{1+\alpha}\varepsilon,
$
поэтому при $R > R_3$
$$
|U(z) - 1|\le \sum\limits_{n=1}^{N(R)}\binom{N(R)}{n}(2R^{1 + \alpha}\varepsilon)^n \le \sum\limits_{n=1}^{\lceil 3aeR\rceil}\binom{\lceil 3aeR\rceil}{n}(2R^{1 + \alpha}\varepsilon)^n,
$$
и
$$
|S(z)|\le 2N(R)\varepsilon R^{\alpha}\le 6aeR^{1 + \alpha}\varepsilon,
$$
так что
$$
|W(z) - 1|\le \varphi_{\alpha}(R, \varepsilon), $$
где
\begin{equation}
\label{eq_varphi}
\varphi_{\alpha}(R, \varepsilon):=
\sum\limits_{n=1}^{\lceil 3aeR\rceil}\binom{\lceil 3aeR\rceil}{n}(2R^{1 + \alpha}\varepsilon)^n\exp\left( 6aeR^{1+\alpha}\varepsilon\right) + 6aeR^{1 + \alpha}\varepsilon\exp\left( 6aeR^{1+\alpha}\varepsilon\right).
\end{equation}

Наконец,
$$
\left|\frac{1}{2\pi}\int\limits_{-R^{\alpha}}^{R^{\alpha}}(\psi_2(x) - \psi_1(x))e^{-ixt}dx\right| = \frac{1}{2\pi}\left| \int\limits_{\Gamma(A, R^{\alpha})}(\psi_2(z) - \psi_1(z))e^{-izt}dz\right|\le
$$
$$
\le \frac{2AQ_1}{\pi}e^{aQ_1 + 2aA}R^{-\alpha} + R^{\alpha - 1 + \delta} + \frac{2}{\pi}e^{2aA}R^{\alpha}\varphi_{\alpha}( R, \varepsilon).
$$

Складывая это неравенство с  \eqref{eq_integral_apriori} при $$R > R_4 := \max\left\{ R_1, R_3, (1 + Q_1e^{aQ_1})^{\frac{1}{\alpha}}, \left(\frac{3}{\pi a}\right)^{\frac{1}{\alpha}}, \exp\left(\frac{\ln(2)}{1-\delta - \alpha}\right)\right\},$$ получаем требуемое.


\bigskip

{\bf 6. Оценка ядра $K_{12}(x, t)$.}
\begin{lemma}
\label{lemma_K_12_estimate}
В обозначениях леммы \ref{lemma_K_2_minus_K_1_estimate} при условии $R > R_4$ для $t\in (0, 2a)$ выполнено
\begin{equation}
\label{eq_K_12_estimate}
|K_{12}(0, t)|\le \frac{1}{2}Q_1e^{aQ_1}\min\left( 1, \frac{4}{\pi R^{\alpha}t}\right) + \widehat{\Psi},\end{equation} где
$$
\widehat{\Psi} = \Psi + \Psi_{\varepsilon},
$$
$$
\Psi := \frac{Q_1^2e^{2aQ_1}}{\pi R^{\alpha}}\ln\left(\frac{\pi a e R^{\alpha}}{2}\right) +(1 + aQ_1e^{aQ_1})\Phi,$$
$$
\Psi_{\varepsilon} = (1 + aQ_1e^{aQ_1})\Phi_{\varepsilon}.
$$
\end{lemma}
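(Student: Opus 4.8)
The plan is to read off $K_{12}(0,t)$ from the convolution identity \eqref{eq_K12_K1_K_2},
\[
K_{12}(0, t) = \bigl(K_{02}(0, t) - K_{01}(0, t)\bigr) + \int\limits_{0}^{t}\bigl( K_{02}(0, s) - K_{01}(0, s)\bigr) K_{10}(s, t)\,ds ,
\]
and to substitute into it the pointwise bound for $K_{02}(0,\cdot)-K_{01}(0,\cdot)$ already obtained in Lemma~\ref{lemma_K_2_minus_K_1_estimate} together with the elementary a priori bound $|K_{10}(s,t)|\le\frac{Q_1}{2}e^{aQ_1}$, valid for $0\le s\le t\le 2a$, established in Section~2. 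Taking absolute values and using the triangle inequality reduces the whole problem to estimating one additional single integral.

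First I would keep, in the first summand, the term $\frac12 Q_1 e^{aQ_1}\min\bigl(1,\frac{4}{\pi R^\alpha t}\bigr)$ from \eqref{eq_K_2_minus_K_1_estimate} as the leading term of \eqref{eq_K_12_estimate} and push the remainder $\widehat{\Phi}=\Phi+\Phi_\varepsilon$ into the error. In the integral I would bound $|K_{10}(s,t)|\le\frac{Q_1}{2}e^{aQ_1}$ and, for $|K_{02}(0,s)-K_{01}(0,s)|$, apply \eqref{eq_K_2_minus_K_1_estimate} with $s$ in place of $t$. This splits the integral into two pieces. The $\widehat{\Phi}$-piece satisfies
\[
\int\limits_0^t \widehat{\Phi}\,\frac{Q_1}{2}e^{aQ_1}\,ds \le aQ_1 e^{aQ_1}\,\widehat{\Phi} ,
\]
so that together with the $\widehat{\Phi}$ coming from the first summand it produces exactly the factor $1+aQ_1 e^{aQ_1}$ in front of $\Phi$ and $\Phi_\varepsilon$. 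The remaining principal piece is $\frac{Q_1^2 e^{2aQ_1}}{4}\int_0^t \min\bigl(1,\frac{4}{\pi R^\alpha s}\bigr)\,ds$.

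The one computation that needs a moment's care is this last integral. Writing $C=\frac{4}{\pi R^\alpha}$ and noting that $R>R_4\ge\left(\frac{3}{\pi a}\right)^{1/\alpha}$ forces $R^\alpha>\frac{3}{\pi a}$, hence $0<C<2a$ and $\ln\frac{2a}{C}\ge\ln\frac32>0$, one checks by treating separately the cases $t\le C$ and $t>C$ (and using $t\le 2a$) that
\[
\int\limits_0^t \min\Bigl(1,\tfrac{C}{s}\Bigr)\,ds \le C\Bigl(1+\ln\tfrac{2a}{C}\Bigr) = C\ln\frac{\pi a e R^\alpha}{2}.
\]
Substituting $C=\frac{4}{\pi R^\alpha}$ converts this piece into $\frac{Q_1^2 e^{2aQ_1}}{\pi R^\alpha}\ln\frac{\pi a e R^\alpha}{2}$, the first term of $\Psi$. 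Collecting the three contributions yields
\[
|K_{12}(0,t)|\le \tfrac12 Q_1 e^{aQ_1}\min\Bigl(1,\tfrac{4}{\pi R^\alpha t}\Bigr) + \frac{Q_1^2 e^{2aQ_1}}{\pi R^\alpha}\ln\frac{\pi a e R^\alpha}{2} + (1+aQ_1 e^{aQ_1})(\Phi+\Phi_\varepsilon),
\]
which is precisely \eqref{eq_K_12_estimate} with $\widehat{\Psi}=\Psi+\Psi_\varepsilon$ as stated. There is no deep obstacle here, since all the analytic work was done in Lemma~\ref{lemma_K_2_minus_K_1_estimate}; the only points to watch are that the identity $\ln\frac{\pi a e R^\alpha}{2}=1+\ln\frac{\pi a R^\alpha}{2}$ lets a single logarithm absorb both the ``$+C$'' and the ``$C\ln(2a/C)$'' contributions, and that the degenerate range $t\le C$ is still covered because that logarithm is nonnegative as soon as $R^\alpha\ge\frac{2}{\pi a}$, which holds under $R>R_4$.
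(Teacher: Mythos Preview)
Your proof is correct and follows exactly the approach of the paper, which consists of the single line ``substitute the estimate of Lemma~\ref{lemma_K_2_minus_K_1_estimate} into \eqref{eq_K12_K1_K_2}, using $|K_{j0}(x,t)|\le \tfrac12 Q_1 e^{aQ_1}$.'' You have simply supplied the details the paper omits, in particular the evaluation of $\int_0^t \min(1,C/s)\,ds\le C\ln\frac{\pi a e R^\alpha}{2}$ and the check that this bound is valid for $R>R_4$.
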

{\bf Доказательство}

Подставляем оценку леммы \ref{lemma_K_2_minus_K_1_estimate} в равенство \eqref{eq_K12_K1_K_2}, учитывая $|K_{j0}(x, t)|\le \frac{1}{2}Q_1e^{aQ_1}$.

\begin{lemma}\label{lemma_Kernel_estimate}
Для всех $\alpha\in (0, 1-\delta)$ при условии $R > R_4$ выполнено неравенство
\begin{equation}\label{eq_Kernel_estimate}|K_{12}(x, t)|\le \frac{1}{2}Q_1e^{aQ_1}\min\left( 1, \frac{4}{\pi R^{\alpha}(x+t)}\right) + \widehat{\Omega},
\end{equation}
 где
 $$
\widehat{\Omega}  = \Omega + \Omega_{\varepsilon},
 $$
 $$\Omega = e^{2aQ_1}\left( \Psi + \frac{2Q_1^2e^{aQ_1}}{\pi R^{\alpha}}\ln\left(\frac{\pi a e  R^{\alpha}}{2}\right)\right), \ \Omega_{\varepsilon} = e^{2aQ_1}\Psi_{\varepsilon}.$$

\end{lemma}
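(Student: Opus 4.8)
The plan is to transport the bound of Lemma~\ref{lemma_K_12_estimate} from the line $x=0$ to all admissible $(x,t)$ by means of the Goursat integral equation \eqref{eq_K12_integral_equation}. Passing to $u=(t-x)/2$, $v=(t+x)/2$, so that $K_{12}(x,t)=\widetilde K_{12}(u,v)$ with $0\le u\le v\le a$, this equation reads
$$
\widetilde K_{12}(u,v)=K_{12}(0,2v)+\int_u^v dr\int_v^a ds\,\bigl(q_1(s+r)-q_2(s-r)\bigr)\widetilde K_{12}(r,s),
$$
and the key structural point is that the argument of the free term is exactly $2v=x+t$. Set
$$
B(v):=\frac12 Q_1e^{aQ_1}\min\Bigl(1,\frac{2}{\pi R^\alpha v}\Bigr)=\frac12 Q_1e^{aQ_1}\min\Bigl(1,\frac{4}{\pi R^\alpha(x+t)}\Bigr),
$$
so that Lemma~\ref{lemma_K_12_estimate} gives $|K_{12}(0,2v)|\le B(v)+\widehat{\Psi}$ for $R>R_4$; the goal is to prove $|\widetilde K_{12}(u,v)|\le B(v)+\widehat{\Omega}$.

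I would first isolate two elementary estimates drawn from the a priori bounds collected after \eqref{eq_KernelEstimates}. First, for $s\ge v\ge u$,
$$
\int_u^v\bigl|q_1(s+r)-q_2(s-r)\bigr|\,dr\le\int_{s+u}^{s+v}|q_1|+\int_{s-v}^{s-u}|q_2|\le 2Q_1 .
$$
Second, since $R>R_4>(2/(\pi a))^{1/\alpha}$, writing $c=2/(\pi R^\alpha)<a$,
$$
\int_0^a B(s)\,ds=\frac12 Q_1e^{aQ_1}\Bigl(c+\int_c^a\frac{c}{s}\,ds\Bigr)=\frac12 Q_1e^{aQ_1}\,c\ln\frac{ae}{c}=\frac{Q_1e^{aQ_1}}{\pi R^\alpha}\ln\Bigl(\frac{\pi ae}{2}R^\alpha\Bigr).
$$
Combining the two (extending the inner integral from $[v,a]$ to $[0,a]$, which is legitimate as $B\ge0$) gives, for every $u\le v$,
$$
\int_u^v dr\int_v^a ds\,\bigl|q_1(s+r)-q_2(s-r)\bigr|\,B(s)\le 2Q_1\int_0^a B(s)\,ds=\frac{2Q_1^2e^{aQ_1}}{\pi R^\alpha}\ln\Bigl(\frac{\pi ae}{2}R^\alpha\Bigr)=:N,
$$
which is exactly the extra summand occurring inside $\Omega$; note that its smallness comes from integrating the slowly decaying factor $\min(1,\cdot)$, not from pointwise decay of $B$.

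Then I would close the argument by a backward Gronwall inequality. Put $P(v):=\sup_{0\le u\le v}\bigl(|\widetilde K_{12}(u,v)|-B(v)\bigr)^{+}$, which is bounded (by $Q_1e^{2aQ_1}$, using \eqref{eq_KernelEstimates}) and measurable because $\widetilde K_{12}$ is continuous. For $r\le v\le s$ one has $|\widetilde K_{12}(r,s)|\le B(s)+P(s)$, so the integral equation, Lemma~\ref{lemma_K_12_estimate}, and the two estimates above yield
$$
|\widetilde K_{12}(u,v)|\le B(v)+\widehat{\Psi}+N+\int_v^a P(s)\Bigl[\int_u^v\bigl|q_1(s+r)-q_2(s-r)\bigr|\,dr\Bigr]ds\le B(v)+\widehat{\Psi}+N+2Q_1\int_v^a P(s)\,ds .
$$
Taking the supremum over $u\le v$ gives $P(v)\le(\widehat{\Psi}+N)+2Q_1\int_v^a P(s)\,ds$, whence, by the backward Gronwall lemma on $[v,a]$, $P(v)\le(\widehat{\Psi}+N)e^{2Q_1(a-v)}\le e^{2aQ_1}(\widehat{\Psi}+N)$. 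Since $e^{2aQ_1}\widehat{\Psi}+e^{2aQ_1}N=\Omega+\Omega_\varepsilon=\widehat{\Omega}$ by the definitions of $\Psi,\Psi_\varepsilon$ and $N$, and $B(v)=\tfrac12 Q_1e^{aQ_1}\min(1,\tfrac{4}{\pi R^\alpha(x+t)})$, this is precisely \eqref{eq_Kernel_estimate}.

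The main obstacle — and the place where the invariance $x+t=2v$ is essential — is to keep the coefficient of $\min(1,\tfrac{4}{\pi R^\alpha(x+t)})$ equal to $\tfrac12 Q_1e^{aQ_1}$ rather than $\tfrac12 Q_1e^{aQ_1}\cdot e^{2aQ_1}$: one must not feed the whole bound $B(v)+\widehat{\Psi}$ into the iteration, but instead pull $B(v)$ out at the zeroth step and control its contribution to the remainder once and for all through the single estimate $\iint|q_1(s+r)-q_2(s-r)|\,B(s)\le N$, so that only $\widehat{\Psi}$ and $N$ get multiplied by $e^{2aQ_1}$. A minor point is the justification of the Gronwall inequality for the merely measurable function $P$, which follows from the uniform bound on $\widetilde K_{12}$ supplied by \eqref{eq_KernelEstimates}.
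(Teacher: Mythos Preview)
Your proof is correct and follows essentially the same approach as the paper: both arguments feed the bound $|K_{12}(0,2v)|\le B(v)+\widehat\Psi$ from Lemma~\ref{lemma_K_12_estimate} into the Goursat integral equation \eqref{eq_K12_integral_equation}, isolate $B(v)$ at the zeroth step, control its single contribution to the remainder by the quantity you call $N$, and then amplify only $\widehat\Psi+N$ by the factor $e^{2Q_1(a-v)}\le e^{2aQ_1}$. The only cosmetic difference is that the paper writes out the Picard series $\widetilde K_{12}=\sum_{n\ge0}\widetilde K_{12}^{(n)}$ and sums the resulting bounds termwise, whereas you package the identical estimate as a backward Gronwall inequality for $P(v)$; the two formulations are interchangeable and yield exactly the same constants.
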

{\bf Доказательство.} Перейдем к переменным $(u, v)$:
$$K_{12}(x, t) = \widetilde{K}_{12}\left(\frac{t-x}{2}, \frac{t + x}{2}\right),$$
где $\widetilde{K}_{12}$ --- решение интегрального уравнения \ref{eq_K12_integral_equation}, которое находится методом итераций: $$\widetilde{K}_{12}(u, v) = \sum\limits_{n=0}^{\infty}\widetilde{K}_{12}^{(n)}(u, v), \
\widetilde{K}_{12}^{(0)}(u, v) =\widetilde{K}_{12}(v, v) = K_{12}(0, 2v),$$
$$  \ \widetilde{K}^{(n+1)}_{12}(u, v) = \int\limits_{u}^vdy\int\limits_{v}^{a}dz\left( q_1(z + y) - q_2(z - y)\right)\widetilde{K}_{12}^{(n)}(y, z).
$$
По лемме \ref{lemma_K_12_estimate}, при $R > R_4$
$$
|\widetilde{K}_{12}^{(0)}(u, v)| \le \frac{1}{2}Q_1e^{aQ_1}\min\left( 1, \frac{2}{\pi R^{\alpha}v}\right) + \widehat{\Psi},
$$
$$
\left|\widetilde{K}_{12}^{(1)}(u, v)\right| \le 2Q_1(a-v)\widehat{\Psi} + \frac{2Q_1^2e^{aQ_1}}{\pi R^{\alpha}}\ln\left(\frac{\pi a e}{2}R^{\alpha}\right)
$$
Далее, для всех $n\ge 0$
$$
\left| \widetilde{K}_{12}^{(n+1)}(u, v)\right| \le \frac{(2Q_1(a-v))^{n+1}}{(n+1)!}\widehat{\Psi} + \frac{2Q_1^2e^{aQ_1}}{\pi R^{\alpha}}\ln\left( \frac{\pi a e}{2}R^{\alpha}\right)\frac{(2Q_1(a-v))^{n}}{n!} ,
$$
$$
|\widetilde{K}_{12}(u, v)|\le \sum\limits_{n=0}^{\infty}|\widetilde{K}_{12}^{(n)}(u, v)|\le
$$
$$\le\frac{1}{2}Q_1e^{aQ_1}\min\left( 1, \frac{2}{\pi R^{\alpha}v}\right) + \widehat{\Psi} e^{2Q_1(a-v)} + \frac{2Q_1^2e^{aQ_1}}{\pi R^{\alpha}}\ln\left( \frac{\pi a e R^{\alpha}}{2}\right) e^{2Q_1(a-v)},
$$
откуда  и следует требуемое.

\begin{theorem}
\label{th_MainTheorem}
Существует такая константа $R_0 = R_0(\delta, Q_1, D_p, a)$, что при условии $R > R_0$ выполнена оценка
\begin{equation}\label{eq_MainEstimate}
\max\limits_{0\le x\le a}\left| \int\limits_{x}^{a}(q_2(t) - q_1(t))\,dt\right| \le \psi(R, \varepsilon) + C_0R^{-(1-\delta)\frac{p-1}{3p-2}}(1 + \chi(R)),
\end{equation}
где $$C_0 = 2\left(D_p^{\frac{p}{2p-1}}\left(\frac{2Q_1e^{aQ_1}}{\pi}\right)^{\frac{p-1}{2p-1}} + e^{2aQ_1}(1 + aQ_1e^{aQ_1})\right),
$$
а $\chi(R) = O\left(R^{-(1-\delta)\frac{(p-1)^2}{p(3p-2)}}(\ln R)^{\frac{p-1}{p}}\right)$ при $R\to\infty$,
$\psi(R, \varepsilon) = O(\varepsilon)$ при $\varepsilon\to 0$.
\end{theorem}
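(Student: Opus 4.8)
The starting point is the identity $\int_x^a(q_2(t)-q_1(t))\,dt=2K_{12}(x,x)$, which reduces the claim to an estimate of $M:=\max_{0\le x\le a}|K_{12}(x,x)|$. I fix once and for all the exponent
$$\alpha:=\frac{(1-\delta)(2p-1)}{3p-2}\in(0,1-\delta)$$
(the inclusion holds since $\tfrac{2p-1}{3p-2}<1$ for $p>1$), so that Lemma~\ref{lemma_Kernel_estimate}, and the lemmas it rests on, are available for $R>R_4$ with this $\alpha$. For such $R$, taking $t=x$ in that lemma gives $|K_{12}(x,x)|\le\tfrac12 Q_1e^{aQ_1}\min\!\big(1,\tfrac{2}{\pi R^{\alpha}x}\big)+\widehat{\Omega}$.

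The only real difficulty is that the summand $\tfrac12 Q_1e^{aQ_1}\min(1,\tfrac{2}{\pi R^{\alpha}x})$ does not decay as $x\to0$, so the kernel bound alone is useless near the left endpoint. To remedy this I would use that $x\mapsto K_{12}(x,x)=\tfrac12\int_x^a(q_2-q_1)$ is absolutely continuous and, by H\"older's inequality applied to $q_2-q_1\in L_p(0,a)$,
$$|K_{12}(x,x)-K_{12}(x_0,x_0)|\le\tfrac12\Vert q_2-q_1\Vert_p\,|x_0-x|^{\frac{p-1}{p}}\le\tfrac12 D_p\,x_0^{\frac{p-1}{p}},\qquad 0\le x\le x_0 .$$
Combining this for $x\in[0,x_0]$ with the kernel bound for $x\in[x_0,a]$ (where $\min(1,\tfrac{2}{\pi R^{\alpha}x})\le\tfrac{2}{\pi R^{\alpha}x_0}$) yields, for every split point $x_0\in(0,a]$,
$$M\le\frac{c_1}{x_0}+c_2\,x_0^{\frac{p-1}{p}}+\widehat{\Omega},\qquad c_1:=\frac{Q_1e^{aQ_1}}{\pi R^{\alpha}},\quad c_2:=\tfrac12 D_p .$$

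Next I choose $x_0$ so as to equalize the first two summands, i.e. $x_0:=(c_1/c_2)^{p/(2p-1)}$; since $c_1\to0$, this point lies in $(0,a]$ once $R$ exceeds a threshold depending only on $\delta,Q_1,D_p,a$. Then $\tfrac{c_1}{x_0}=c_2 x_0^{(p-1)/p}=c_1^{\frac{p-1}{2p-1}}c_2^{\frac{p}{2p-1}}$, and a short computation gives $2c_1^{\frac{p-1}{2p-1}}c_2^{\frac{p}{2p-1}}=D_p^{\frac{p}{2p-1}}\big(\tfrac{2Q_1e^{aQ_1}}{\pi}\big)^{\frac{p-1}{2p-1}}R^{-\alpha\frac{p-1}{2p-1}}$. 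The exponents are now pinned down: with the chosen $\alpha$ one has $\alpha\frac{p-1}{2p-1}=(1-\delta)\frac{p-1}{3p-2}=1-\delta-\alpha$, so $R^{-\alpha\frac{p-1}{2p-1}}=R^{\alpha-1+\delta}=R^{-(1-\delta)\frac{p-1}{3p-2}}$; moreover, using $(p-1)^2\ge0$ one checks that $R^{\alpha-1+\delta}$ (with coefficient $1$) is the dominant summand of $\Phi$, so, via $\Omega=e^{2aQ_1}(1+aQ_1e^{aQ_1})\Phi+(\text{logarithmic terms})$, the leading part of $\widehat{\Omega}$ is $e^{2aQ_1}(1+aQ_1e^{aQ_1})R^{\alpha-1+\delta}$ together with the $\varepsilon$-dependent piece $\Omega_{\varepsilon}$.

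Finally, multiplying $M$ by $2$ to return to $\int_x^a(q_2-q_1)$ and adding the two leading contributions reproduces exactly $C_0R^{-(1-\delta)\frac{p-1}{3p-2}}$; every remaining $\varepsilon$-free summand of $\widehat{\Omega}$ — the tail term of order $R^{-\alpha\frac{p-1}{p}}(\ln R)^{\frac{p-1}{p}}$ and the $R^{-\alpha}\ln R$ terms — is, by a one-line comparison of exponents, smaller than the leading rate by a factor $O\big(R^{-(1-\delta)\frac{(p-1)^2}{p(3p-2)}}(\ln R)^{\frac{p-1}{p}}\big)$, and these I collect into $\chi(R)$, while the surviving $\varepsilon$-term $2\Omega_{\varepsilon}$, which tends to $0$ as $\varepsilon\to0$ for fixed $R$, I rename $\psi(R,\varepsilon)$. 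This gives \eqref{eq_MainEstimate} for all $R>R_0$, where $R_0=R_0(\delta,Q_1,D_p,a)$ is the larger of $R_4$ and the threshold ensuring $x_0\le a$. The step I expect to demand the most care is precisely this balancing act: trading the non-decaying kernel term near $x=0$ against the H\"older estimate (the choice of $x_0$) and then balancing the two surviving rates $R^{-\alpha\frac{p-1}{2p-1}}$ and $R^{\alpha-1+\delta}$ (the choice of $\alpha$), which is what forces the exponent $\tfrac{p-1}{3p-2}$.
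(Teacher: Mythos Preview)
Your proof is correct and follows essentially the same route as the paper: invoke Lemma~\ref{lemma_Kernel_estimate} at $t=x$, split the range $[0,a]$ at a point $x_0$ (the paper calls it $\theta$), use H\"older on $q_2-q_1\in L_p$ to control the small-$x$ region, optimise the split point, and then choose $\alpha=(1-\delta)\tfrac{2p-1}{3p-2}$ to balance $R^{-\alpha\frac{p-1}{2p-1}}$ against $R^{\alpha-1+\delta}$. The only cosmetic differences are that the paper minimises $f(\theta)=D_p\theta^{(p-1)/p}+D/\theta$ exactly (getting a constant $A_p\le 2$) whereas you equalise the two terms (getting the factor $2$ directly), and the paper selects $\alpha_*$ at the end rather than at the outset.
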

{\bf Доказательство.}
 По лемме \ref{lemma_Kernel_estimate}, при условии $R > R_4$ имеем для всех $x \in (0, a]$:
$$
\left|\int\limits_{x}^{a}(q_2(t) - q_1(t))dt\right| = 2\left| K_{12}(x, x)\right|\le Q_1e^{aQ_1}\min\left( 1, \frac{2}{\pi R^{\alpha}x}\right) + 2\widehat{\Omega}.
$$

Обозначив $D:=\frac{2Q_1e^{aQ_1}}{\pi }R^{-\alpha}$, для фиксированного $\theta\in\left( \frac{2}{\pi}R^{-\alpha}, a\right)$ и для $0\le x\le \theta$ имеем
$$
\left|\int\limits_{x}^a(q_2(t) - q_1(t))dt\right|\le \int\limits_{0}^{\theta}|q_2(t) - q_1(t)|dt + \left|\int\limits_{\theta}^{a}(q_2(t) - q_1(t))dt\right| \le f(\theta), $$
где
$$f(\theta):=D_p\theta^{\frac{p-1}{p}} + \frac{D}{\theta} + 2\widehat{\Omega};
$$
для $x\in(\theta, a]$ это неравенство выполнено тривиальным образом.

Выберем $R_0$ настолько большим, чтобы при $R > R_0$ точка минимума
$$
\theta_0 = \left(\frac{p}{p-1}\right)^{\frac{p}{2p-1}}\left(\frac{D}{D_p}\right)^{\frac{p}{2p-1}}
$$ функции $f(\theta)$ принадлежала интервалу $(\frac{2}{\pi}R^{-\alpha}, a)$: пусть $$R_0 := \max\left( R_4, \left(\frac{2Q_1 e^{aQ_1}}{\pi D_p}\right)^{\frac{1}{\alpha}}\left(\frac{p}{p-1}\right)^{\frac{1}{\alpha}}a^{-\frac{2p-1}{p\alpha}}, \left(\frac{p-1}{p}\right)^{\frac{p}{\alpha(p-1)}}\left(\frac{2}{\pi}\right)^{\frac{2p-1}{(p-1)\alpha}}\left(\frac{\pi D_p}{2Q_1e^{aQ_1}}\right)^{\frac{p}{\alpha(p-1)}}\right).$$
Тогда
$$
f(\theta_0) = A_p D_p^{\frac{p}{2p-1}} D^{\frac{p-1}{2p-1}} + 2\widehat{\Omega}
$$
---минимальное значение $f(\theta)$ на отрезке $[\frac{2}{\pi}R^{-\alpha}, a]$,
где
$$A_p=\left(\left(\frac{p-1}{p}\right)^{\frac{p}{2p-1}} + \left(\frac{p}{p-1}\right)^{\frac{p-1}{2p-1}}\right)\le 2.$$

Итак,
$$
2|K_{12}(x, x)|\le
2D_p^{\frac{p}{2p-1}}\left(\frac{2Q_1e^{aQ_1}}{\pi}\right)^{\frac{p-1}{2p-1}} R^{-\alpha\frac{p-1}{2p-1}} + 2\widehat{\Omega} = 2D_p^{\frac{p}{2p-1}}\left(\frac{2Q_1e^{aQ_1}}{\pi}\right)^{\frac{p-1}{2p-1}} R^{-\alpha\frac{p-1}{2p-1}}  +
$$
$$
+2e^{2aQ_1}\left((1 + aQ_1e^{aQ_1})\frac{3}{\pi}(D_p + 4a^{\frac{1}{p}}Q_1^2e^{aQ_1})R^{-\frac{p-1}{p}\alpha}\ln\left(\frac{\pi a e}{3}R^{\alpha}\right)^{\frac{p-1}{p}} + (1 + aQ_1e^{aQ_1})R^{\alpha - 1 + \delta}\right) +
$$
$$
+ e^{3aQ_1}\frac{2Q_1^2}{\pi}(2 + e^{aQ_1})R^{-\alpha}\ln\left(\frac{\pi a e}{2}R^{\alpha}\right) + $$$$ +e^{3aQ_1 + 2a(1 + Q_1e^{aQ_1})}\left(\frac{4Q_1(1 + aQ_1e^{aQ_1})(1 + Q_1e^{aQ_1})}{\pi}\right) R^{-\alpha} +
$$
$$
+4e^{2aQ_1}\frac{1 + aQ_1e^{aQ_1}}{\pi}e^{2a(1 + Q_1e^{aQ_1})}R^{\alpha}\varphi_{\alpha}(R, \varepsilon).
$$

Выберем $\alpha\in[0, 1 - \delta]$ так, чтобы минимизировать максимум из показателей степеней $R$ входящих в эту формулу, кроме последнего слагаемого с $\varepsilon$:
$$
\alpha_* = arg \min\limits_{\alpha\in [0, 1 - \delta]}\max\left\{-\alpha\frac{p-1}{2p-1}, \ \alpha - 1 + \delta\right\} = (1-\delta)\frac{2p-1}{3p-2}.
$$

Тогда
$$
2|K_{12}(x, x)|\le C_0(1 + \chi(R))R^{-(1-\delta)\frac{p-1}{3p-2}} + \psi_{\alpha_*}(R, \varepsilon), $$
где $$C_0 =\left(D_p^{\frac{p}{2p-1}}\left(\frac{2Q_1e^{aQ_1}}{\pi}\right)^{\frac{p-1}{2p-1}} + e^{2aQ_1}(1 + aQ_1e^{aQ_1})\right),
$$
\begin{equation*}
\begin{split}
&C_0\chi(R) = \frac{6}{\pi}e^{2aQ_1}(1 + aQ_1e^{aQ_1})(D_p + 4a^{\frac{1}{p}}Q_1^2 e^{aQ_1})R^{-(1-\delta)\frac{(p-1)^2}{p(3p-2)}}\ln\left(\frac{\pi a e}{3}R^{(1-\delta)\frac{2p-1}{3p-2}}\right)^{\frac{p-1}{p}}+\\
&+ \frac{4}{\pi}e^{3aQ_1 + 2a(1 + Q_1e^{aQ_1})}Q_1(1 + aQ_1e^{aQ_1})(1 + Q_1e^{aQ_1})R^{-(1-\delta)\frac{p}{3p-2}} + \\
&+\frac{2}{\pi}Q_1^2e^{3aQ_1}(2 + e^{aQ_1})\ln\left(\frac{\pi a e}{2}R^{
(1-\delta)\frac{2p-1}{3p-2}}\right)R^{-(1-\delta)\frac{p}{3p-2}} = O\left(R^{-(1-\delta)\frac{(p-1)^2}{p(3p-2)}}(\ln R)^{\frac{p-1}{p}}\right)\end{split}
\end{equation*}
при $R\to \infty$ и
\begin{equation*}
\psi_{\alpha}(R, \varepsilon) = 4e^{2aQ_1}\frac{1 + aQ_1e^{aQ_1}}{\pi}e^{2a(1 + Q_1e^{aQ_1})}R^{\alpha}\varphi_{\alpha}(R, \varepsilon) = O(\varepsilon), \ \varepsilon\to 0.
\end{equation*}

\noindent

{
{\bf 7. Оценка $|q_2(x) - q_1(x)|$ при дополнительных априорных условиях.}

План получения оценки $|q_2(x) - q_1(x)|$ состоит в следующем. При наложении условия гладкости на разность $q_2 - q_1$ и выполнения равенства $\int\limits_{0}^a(q_2(t) - q_1(t))dt = 0$ можно получить оценку
$$
|\psi_2(z) - \psi_1(z)|\le \frac{C}{|z|^2}, \ z\in \overline{\mathbb{C}}_+;
$$
это дает возможность получить оценку величины $\frac{d}{dt}\left(K_{02}(0,t) - K_{01}(0, t)\right)$, аналогичную лемме \ref{lemma_K_2_minus_K_1_estimate}.
Далее, оцениваем величину
\begin{equation}
\label{eq_pd_K12_estimate}
\begin{split}
&\frac{\partial K_{12}}{\partial t}(0, t) = \frac{d}{dt}\left((K_{02} - K_{01})(0, t)\right) + (K_{02} - K_{01})(0, t)K_{10}(t, t) + \\
& + \int\limits_{0}^t(K_{02} - K_{01})(0, y)\frac{d}{dy}\left(-\frac{1}{2}\int\limits_{\frac{y+t}{2}}^a q_1(u)du\right) \,dy + \int\limits_{0}^t(K_{02} - K_{01})(0, y)\frac{\partial H_{10}}{\partial t}(y, t)\,dy,
\end{split}
\end{equation}
где
$$
K_{10}(x, t) = -\frac{1}{2}\int\limits_{\frac{t+x}{2}}^a q_1(y)\,dy + H_{10}(x, t),
$$
и $\frac{\partial H_{10}}{\partial t}$ допускает оценку $|\frac{\partial H_{10}}{\partial t}(x, t)|\le \frac{Q_1^2}{2}e^{aQ_1}$.
Наконец, равенства
$$
q_2(x) - q_1(x) = -2\frac{d}{dx}K_{12}(x, x) = -2\frac{d}{dx}\widetilde{K}_{12}(0, x), \ x\in (0, a), $$
$$\frac{d}{dx}\widetilde{K}_{12}(0, x) = \int\limits_{x}^a  (q_1(y + x) - q_2(y - x))\widetilde{K}_{12}(x, y)\,dy -$$
$$-\int\limits_{0}^x  (q_1(x + y) - q_2(x - y))\widetilde{K}_{12}(y, x)\,dy + 2\frac{\partial K_{12}}{\partial t}(0, 2x),  \ x\in (0, a),
$$
позволяют получить оценку $|q_2(x) - q_1(x)|$ при $x\in (0, a)$.

Положим
\begin{equation}
\label{eq_norm_embedding}
A_{\infty}:=\sup\left\{ \Vert q\Vert_{\infty}: \ q\in AC[0, a], \ \Vert q\Vert_{p} + \Vert q'\Vert_{r}\le 1\right\}.
\end{equation}

\begin{theorem}
\label{th_SecondTheorem}
Предположим, что в дополнение к условиям, сформулированным в пункте 3, выполнены следующие условия:
\begin{equation}
\label{eq_add_conditions}
\int\limits_0^a{q_1}(t)dt = \int\limits_0^a q_2(t)dt, \ q_2 - q_1 \in AC[0, a], \ \Vert(q_2 - q_1)'\Vert_r\le D_r'
\end{equation}
для некоторого $r\in (1, \infty]$.
Тогда существуют константы $C_0'$ и $R_0'$ такие, что при условии $R > R_0'$
выполнена оценка
\begin{equation}
\label{eq_norm_estimate}
\Vert q_1 - q_2\Vert\le \psi'(R, \varepsilon) + C_0' R^{-(1-\delta)\frac{r-1}{2r-1}\frac{p-1}{3p-2}}(1 + \chi'(R)),
\end{equation}
где $\chi'(R) = O(\ln(R))$ при $R\to \infty$, $\psi'(R, \varepsilon) = O(\varepsilon)$ при $\varepsilon\to 0$.

\end{theorem}

Заметим, что условие на разность потенциалов $q_2-q_1$ влечет
$$
\Vert q_2 - q_1\Vert_{\infty}\le A_{\infty}(D_p + D_{r}') =: D.
$$

Доказательство этой теоремы включает в себя несколько лемм, в которых предполагаются выполненными условия \eqref{eq_add_conditions}.
\begin{lemma}
\label{lemma_pd_2H_estimate}
Функция $[0, 2a]\ni t\mapsto \frac{\partial H_{0j}}{\partial t}(0, t)$ абсолютно непрерывна, и почти для всех $t\in (0, 2a)$ справедливы оценки
$$
\left|\left(\frac{\partial^2 H_{02}}{\partial t^2} - \frac{\partial^2 H_{01}}{\partial t^2}\right)(0, t) + \frac{1}{4}\left[\rho_2q_2-\rho_1q_1\right]\left(\frac{t}{2}\right) \right|\le Q_1^3e^{aQ_1} + \frac{3}{8}Q_1D,
$$
и
$$
\left|\frac{1}{4}\left[\rho_2q_2 - \rho_1q_1\right]\left(\frac{t}{2}\right)\right|\le \frac{1}{4}Q_1D +\frac{1}{2}Q_1\left| q_2\left(\frac{t}{2}\right)\right|.
$$
\end{lemma}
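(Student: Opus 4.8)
The plan is to produce an explicit integral representation of $\frac{\partial H_{0j}}{\partial t}(0,t)$, differentiate it once more by the Leibniz rule, and read off the two estimates; the asserted absolute continuity will be a by-product of that differentiation. Specializing the formulas for $\partial_u\widetilde H_{12}$ and $\partial_v\widetilde H_{12}$ obtained in the proof of the kernel-estimate lemma to $q_1\equiv 0$, $q_2=q_j$, and evaluating them on the diagonal $u=v=t/2$ (with the substitutions absorbing the shifts), one gets
$$\frac{\partial H_{0j}}{\partial t}(0,t)=\frac12\int_0^{a-t/2}q_j(w)\,\widetilde K_{0j}\!\left(\tfrac t2,\tfrac t2+w\right)dw-\frac12\int_0^{t/2}q_j(w)\,\widetilde K_{0j}\!\left(\tfrac t2-w,\tfrac t2\right)dw .$$
From the Goursat relation $\partial_u\partial_v\widetilde K_{0j}(u,v)=-q_j(v-u)\widetilde K_{0j}(u,v)$ (the $q_1\equiv 0$ case of the identity differentiated from~\eqref{eq_K12_integral_equation}) together with $\widetilde K_{0j}(u,a)=0$ and $\widetilde K_{0j}(0,v)=\tfrac12\rho_j(v)$, $\rho_j(v):=\int_v^\infty q_j$, one obtains $\partial_u\widetilde K_{0j}(u,v)=\int_v^a q_j(s-u)\widetilde K_{0j}(u,s)\,ds$ and $\partial_v\widetilde K_{0j}(u,v)=-\tfrac12 q_j(v)-\int_0^u q_j(v-r)\widetilde K_{0j}(r,v)\,dr$; thus $\widetilde K_{0j}$ is $C^1$ in its first variable and absolutely continuous in its second, the only unbounded contribution being $-\tfrac12 q_j(v)$.

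Differentiating the displayed identity in $t$: the boundary term from the first integral vanishes since $\widetilde K_{0j}(t/2,a)=0$; the boundary term from the second equals $\tfrac14 q_j(t/2)\rho_j(t/2)$; and the $-\tfrac12 q_j$ part of $\partial_v\widetilde K_{0j}$ yields the convolution term $-\tfrac18\int_0^{a-t/2}q_j(w)q_j(\tfrac t2+w)\,dw$ from the first integral and $+\tfrac18 q_j(t/2)\int_0^{t/2}q_j$ from the second, the rest being $q_j$ integrated against the bounded regular part $\mathcal R_j(u,v):=\tfrac12\int_v^a q_j(s-u)\widetilde K_{0j}(u,s)\,ds-\tfrac12\int_0^u q_j(v-r)\widetilde K_{0j}(r,v)\,dr$ of $\tfrac12(\partial_u+\partial_v)\widetilde K_{0j}$; write this last remainder as $E_j(t)$. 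Using $\rho_j(t/2)+\int_0^{t/2}q_j=\int_0^a q_j$ everything collapses to
$$\frac{\partial^2 H_{0j}}{\partial t^2}(0,t)=-\frac14 q_j\!\left(\tfrac t2\right)\rho_j\!\left(\tfrac t2\right)+\frac18 q_j\!\left(\tfrac t2\right)\int_0^a q_j-\frac18\int_0^{a-t/2}q_j(w)q_j\!\left(\tfrac t2+w\right)dw+E_j(t)$$
a.e.\ on $(0,2a)$. The kernel estimates~\eqref{eq_KernelEstimates} give $|\widetilde K_{0j}|\le\tfrac{Q_1}{2}e^{aQ_1}$, hence $|\mathcal R_j|\le\tfrac{Q_1^2}{2}e^{aQ_1}$ and $|E_j(t)|\le\tfrac12 Q_1^3 e^{aQ_1}$; the convolution term lies in $L_1(dt)$ on $[0,2a]$ by Fubini; since all remaining terms of $\tfrac{d}{dt}\tfrac{\partial H_{0j}}{\partial t}(0,\cdot)$ are bounded, $t\mapsto\frac{\partial H_{0j}}{\partial t}(0,t)$ is absolutely continuous.

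It remains to subtract the two formulas ($j=1,2$) and use the hypothesis $\int_0^a q_1=\int_0^a q_2$ from~\eqref{eq_add_conditions}. It makes the difference of the terms $\tfrac18 q_j\int_0^a q_j$ equal to $\tfrac18\big(\int_0^a q_2\big)(q_2-q_1)(t/2)$, of modulus $\le\tfrac18 Q_1 D$; the difference of the convolution terms, written as $\int(q_2-q_1)\,q_2(\cdot+t/2)+\int q_1\,(q_2-q_1)(\cdot+t/2)$, has modulus $\le\tfrac14 Q_1 D$; and $|E_2-E_1|\le|E_1|+|E_2|\le Q_1^3 e^{aQ_1}$. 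Adding these gives the first estimate, the term $-\tfrac14[\rho_2 q_2-\rho_1 q_1](t/2)$ being cancelled against the $+\tfrac14[\rho_2 q_2-\rho_1 q_1](t/2)$ in the statement. The second estimate is immediate from $\rho_2 q_2-\rho_1 q_1=(\rho_2-\rho_1)q_2+\rho_1(q_2-q_1)$ and $|\rho_j|\le Q_1$, $\|q_2-q_1\|_\infty\le D$. I expect the only genuinely delicate point to be justifying the term-by-term differentiation and the absolute-continuity conclusion, since $\widetilde K_{0j}$ is not $C^1$ in $v$: I would first establish all the identities for smooth compactly supported $q_j$, where every manipulation is classical, and then pass to the limit, the kernel estimates~\eqref{eq_KernelEstimates} guaranteeing the needed continuity of all quantities in the $L_1((0,\infty),x\,dx)$-norm of $q_j$.
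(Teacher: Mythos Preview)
Your argument is essentially the paper's own: you derive an explicit formula for $\partial_t H_{0j}(0,t)$, differentiate once more, and arrive at exactly the decomposition the paper writes as $\sum_{k=1}^6 I_j^{(k)}$, where your $-\tfrac14 q_j\rho_j$, $\tfrac18 q_j\!\int_0^a q_j$, and the convolution term are the paper's $I_j^{(6)},I_j^{(5)},I_j^{(4)}$, and your remainder $E_j$ is $I_j^{(1)}+I_j^{(2)}+I_j^{(3)}$; the resulting bounds coincide. One slip to fix in your narrative: the boundary term from the second integral is $-\tfrac18 q_j(t/2)\rho_j(t/2)$, not $\tfrac14 q_j(t/2)\rho_j(t/2)$ (the upper-limit contribution is $-\tfrac12\cdot\tfrac12\,q_j(t/2)\,\widetilde K_{0j}(0,t/2)$ with $\widetilde K_{0j}(0,t/2)=\tfrac12\rho_j(t/2)$); with the correct value the identity $\rho_j(t/2)+\int_0^{t/2}q_j=\int_0^a q_j$ indeed yields your displayed formula, so the error does not propagate.
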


{\bf Доказательство.}
Введем следующие обозначения:
\begin{equation}
\begin{split}
&h_j(u, v): = q_j(u)K_{0j}(u, v), \ u\le v,\\
&\rho_j(x): = \int\limits_{x}^a q_j(t)dt;
\end{split}
\end{equation}
тогда $$\rho_1(0) = \rho_2(0), \ |\rho_j(x)|\le Q_1, \ |h_j(u, v)|\le \frac{Q_1}{2}e^{aQ_1}|q_j(u)|, \ j = 1, 2.$$
Легко показать, что
$$
\frac{\partial H_{0j}}{\partial t}(x, t) = \frac{1}{4}\rho_j^2\left(\frac{t+x}{2}\right) - \frac{1}{4}\int\limits_{\frac{t +x}{2}}^a \rho_j\left( v - \frac{t -x}{2}\right) q_j(v)dv + $$
$$+ \frac{1}{2}\int\limits_{0}^{\frac{t -x}{2}}du\int\limits_{\frac{t + x}{2}}^a dv \left[\rho_j\left(\frac{t + x}{2} - u\right) - \rho_j\left( v - \frac{t - x}{2}\right)\right]q_j(v - u)K_{0j}(v - u, v+u);
$$
отсюда следует абсолютная непрерывность функции $\frac{\partial  H_{0j}}{\partial  t}(0, \cdot)$.
Дифференцированием получаем
$$
\frac{\partial^2 H_{0j}}{\partial t^2}(0, t) = \sum\limits_{k=1}^6 I_j^{(k)},
$$
где
$$
I_j^{(1)} = -\frac{1}{4}\int\limits_0^{\frac{t}{2}}du\int\limits_{\frac{t}{2}}^a dv \left( q_j\left( \frac{t}{2} - u\right)  + q_j\left( v - \frac{t}{2}\right)\right) h_j(v-u, v+u).
$$
$$
I_j^{(2)} = - \frac{1}{4}\int\limits_0^{\frac{t}{2}}\left(\rho_j\left(\frac{t}{2} - u\right) - \rho_j(0)\right) h_j\left(\frac{t}{2} - u, \frac{t}{2} + u\right)
du,
$$
$$
I_j^{(3)} = \frac{1}{4}\int\limits_{\frac{t}{2}}^a \left( \rho_j(0) - \rho_j\left( v - \frac{t}{2}\right)\right) h_j\left( v - \frac{t}{2}, v + \frac{t}{2}\right) dv,
$$
$$
I_j^{(4)} = - \frac{1}{8}\int\limits_{\frac{t}{2}}^a  q_j\left( v-\frac{t}{2}\right) q_j(v) dv, \ I_j^{(5)} = \frac{1}{8}\rho_j(0)q_j\left(\frac{t}{2}\right), \ I_j^{(6)} = -\frac{1}{4}\rho_j\left(\frac{t}{2}\right) q_j\left(\frac{t}{2}\right).
$$
Имеем
$$
|I_2^{(1)} - I_1^{(1)}|\le \frac{1}{2}Q_1^3e^{aQ_1},  \ |I_2^{(2)} - I_1^{(2)}|\le \frac{1}{4}Q_1^3e^{aQ_1},  \ |I_2^{(3)} - I_1^{(3)}|\le \frac{1}{4}Q_1^3e^{aQ_1},
$$
$$
|I_2^{(4)} - I_1^{(4)}|\le \frac{1}{8}\int\limits_{\frac{t}{2}}^a dv\left( |q_2(v - t/2) - q_1(v-t/2)||q_2(v)| + |q_1(v-t/2)||q_2(v) - q_1(v)|\right)\le
$$
$$\le \frac{1}{8}DQ_1 + \frac{1}{8}DQ_1 = \frac{1}{4}DQ_1,$$
$$|I_2^{(5)} - I_1^{(5)}|\le \frac{1}{8}Q_1D,$$

Складывая предыдущие оценки, получаем требуемое неравенство.

\begin{lemma}
При $z\in \overline{\mathbb{C}}_+$ справедлива оценка
\begin{equation}
\label{eq_strong_estimate}
|\psi_2(z) - \psi_1(z)|\le \frac{F}{|z|^2},
\end{equation}
где
\begin{equation}
\label{eq_F}
F:=\frac{1}{4}\left( 2D  +8Q_1^2e^{aQ_1} + a^{\frac{r-1}{r}}D_r'+ 4Q_1^2  + 8a\left( Q_1^3e^{aQ_1} + \frac{5}{8}Q_1D\right) \right).
\end{equation}

\end{lemma}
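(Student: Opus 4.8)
The plan is to pass to the kernel difference $Q(t):=K_{02}(0,t)-K_{01}(0,t)$ on $[0,2a]$, to use the Jost representation
\[
\psi_2(z)-\psi_1(z)=\int_0^{2a}Q(t)\,e^{izt}\,dt,\qquad z\in\overline{\mathbb{C}}_+,
\]
and to integrate by parts twice. The first condition in \eqref{eq_add_conditions} gives $Q(0)=\tfrac12\int_0^a(q_2-q_1)\,ds=0$, while $Q(2a)=0$ since $q_j$ is supported in $[0,a]$ and $K_{0j}(0,\cdot)$ is continuous; hence the first integration by parts leaves no boundary term:
\[
\psi_2(z)-\psi_1(z)=-\frac{1}{iz}\int_0^{2a}Q'(t)\,e^{izt}\,dt,\qquad
Q'(t)=-\tfrac14(q_2-q_1)\!\left(\tfrac t2\right)+\Bigl(\tfrac{\partial H_{02}}{\partial t}-\tfrac{\partial H_{01}}{\partial t}\Bigr)(0,t).
\]
Now $Q'\in AC[0,2a]$: indeed $(q_2-q_1)(\cdot/2)\in AC$ by \eqref{eq_add_conditions}, and $t\mapsto\bigl(\tfrac{\partial H_{02}}{\partial t}-\tfrac{\partial H_{01}}{\partial t}\bigr)(0,t)$ is Lipschitz by Lemma~\ref{lemma_pd_2H_estimate}. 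A second integration by parts is therefore legitimate and gives
\[
\psi_2(z)-\psi_1(z)=\frac{1}{z^2}\Bigl(Q'(2a)e^{2iaz}-Q'(0)-\int_0^{2a}Q''(t)\,e^{izt}\,dt\Bigr),
\]
with $Q''(t)=-\tfrac18(q_2-q_1)'(t/2)+\bigl(\tfrac{\partial^2H_{02}}{\partial t^2}-\tfrac{\partial^2H_{01}}{\partial t^2}\bigr)(0,t)$.

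Since $|e^{izt}|=e^{-\im(z)t}\le1$ for $z\in\overline{\mathbb{C}}_+$ and $t\ge0$, it suffices to bound $|Q'(0)|+|Q'(2a)|+\int_0^{2a}|Q''(t)|\,dt$ by $F$. For the boundary terms I would use $\Vert q_2-q_1\Vert_\infty\le D$ (the remark after Theorem~\ref{th_SecondTheorem}) together with the crude bound $\bigl|\tfrac{\partial H_{0j}}{\partial t}(0,t)\bigr|\le\tfrac12Q_1^2e^{aQ_1}$, which gives $|Q'(0)|+|Q'(2a)|\le\tfrac12D+2Q_1^2e^{aQ_1}$. For the remaining integral, H\"older's inequality on $[0,a]$ yields $\int_0^{2a}\tfrac18|(q_2-q_1)'(t/2)|\,dt=\tfrac14\int_0^a|(q_2-q_1)'(s)|\,ds\le\tfrac14a^{(r-1)/r}D_r'$; and combining the two estimates of Lemma~\ref{lemma_pd_2H_estimate} by the triangle inequality, together with $\int_0^{2a}|q_2(t/2)|\,dt=2\Vert q_2\Vert_1\le2Q_1$, gives $\int_0^{2a}\bigl|\bigl(\tfrac{\partial^2H_{02}}{\partial t^2}-\tfrac{\partial^2H_{01}}{\partial t^2}\bigr)(0,t)\bigr|\,dt\le2a\bigl(Q_1^3e^{aQ_1}+\tfrac58Q_1D\bigr)+Q_1^2$. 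Adding the four contributions reproduces exactly the constant $F$ of \eqref{eq_F}.

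The computation itself is short once Lemma~\ref{lemma_pd_2H_estimate} and the $L_1$--type kernel bounds are available; the points that need care are the absolute-continuity claims that make the two integrations by parts valid — they rely entirely on $q_2-q_1\in AC[0,a]$ and on the Lipschitz regularity of $\tfrac{\partial H_{0j}}{\partial t}(0,\cdot)$ — and the verification that the boundary values vanish where claimed. In particular, the hypothesis $\int_0^aq_1=\int_0^aq_2$ is used exactly once, namely to annihilate the boundary term of the first integration by parts, and it is precisely this that promotes the generic $O(|z|^{-1})$ decay of $\psi_2-\psi_1$ to the $O(|z|^{-2})$ decay asserted in the lemma.
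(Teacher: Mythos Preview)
Your proof is correct and follows essentially the same route as the paper: start from the Jost--kernel representation, integrate by parts twice (the paper packages the first integration as formula \eqref{eD_psi_represent1} and calls your $-4Q'$ by the name $h$), then estimate the boundary terms via $\|q_2-q_1\|_\infty\le D$ and $|\partial H_{0j}/\partial t|\le \tfrac12 Q_1^2e^{aQ_1}$, the $(q_2-q_1)'$ contribution via H\"older, and the $\partial^2 H_{0j}/\partial t^2$ contribution via Lemma~\ref{lemma_pd_2H_estimate}. Your arithmetic and the resulting constant $F$ match the paper exactly; the only cosmetic difference is that the paper splits the second-derivative integral as $2a(Q_1^3e^{aQ_1}+\tfrac38Q_1D)+\tfrac12 aQ_1D+Q_1^2$ before recombining, whereas you go straight to $2a(Q_1^3e^{aQ_1}+\tfrac58Q_1D)+Q_1^2$.
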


{\bf Доказательство.}
Поскольку $K_{0j}(0, 0) = \frac{1}{2}\int\limits_{0}^a q_j(t)dt$, и по условию, $K_1(0, 0) = K_2(0, 0)$, то из \eqref{eD_psi_represent1} получаем для всех $z\neq 0$:
$$
\psi_2(z) - \psi_1(z) = \frac{i}{z}\int\limits_0^{2a}\left(\frac{\partial K_{02}}{\partial t} - \frac{\partial K_{01}}{\partial t}\right)(0, t)e^{izt}dt = -\frac{i}{4z}\int\limits_0^{2a}h(y)e^{izy}ds,
$$
$$
h(y) = q_2\left(\frac{y}{2}\right) - q_1\left(\frac{y}{2}\right) - 4\left(\frac{\partial H_{02}}{\partial t}- \frac{\partial H_{01}}{\partial t}\right)(0, y),  \ y\in(0, 2a).
$$
Получаем оценку
$$
\Vert h\Vert_{\infty}\le D + 4Q_1^2e^{aQ_1},
$$
а из $q_2 - q_1 \in AC[0, a]$ следует, что $h\in AC[0, 2a]$, поэтому
$$
\int\limits_{0}^{2a}h(y)e^{izy}\,dy =\frac{1}{iz}\left( h(2a)e^{2aiz} - h(0)\right) - \frac{1}{iz}\int\limits_{0}^{2a}h'(y)e^{izy}d\,y
$$  и
\begin{equation}
\label{eq_strong_representation}
\begin{split}
& \psi_2(z) - \psi_1(z) = -\frac{1}{4z^2}\left( h(2a)e^{2aiz} - h(0)\right) + \\
& + \frac{1}{8z^2}\int\limits_{0}^{2a} e^{izy}\left( q_2 - q_1\right)'\left(\frac{y}{2}\right)\,dy -\frac{1}{z^2}\int\limits_0^{2a} e^{izy}\left(\frac{\partial^2 H_{02}}{\partial t^2} - \frac{\partial^2 H_{01}}{\partial t^2}\right)(0, y)\,dy.
\end{split}
\end{equation}

Тогда
$$
|h(2a)e^{2aiz} - h(0)|\le 2(D + 4Q_1^2e^{aQ_1}),
$$
$$
\left|\frac{1}{8}\int\limits_{0}^{2a}e^{izy}(q_2 - q_1)'\left(\frac{y}{2}\right) \,dy\right| \le \frac{1}{4}a^{\frac{1}{r}}D_r',
$$
$$
\left|\int\limits_{0}^{2a}e^{izy}\left(\frac{\partial ^2 H_{02}}{\partial  t^2} - \frac{\partial ^2 H_{01}}{\partial  t^2}\right)(0,y)\,dy\right| \le 2a\left( Q_1^3e^{aQ_1} + \frac{3}{8}Q_1D\right) + $$
$$+\frac{1}{4}\left|\int\limits_{0}^{2a}\left[ \rho_2h_2 - \rho_1h_1\right]\left(\frac{y}{2}\right) e^{izy}\,dy\right|\le 2a\left( Q_1^3e^{aQ_1} + \frac{3}{8}Q_1D\right)+ \frac{1}{2}aQ_1D+ Q_1^2 .
$$

\begin{lemma}
При произвольном $\beta \in (0, 1-\delta)$ и условии $R > R_5 = \max\left\{ a^{-\frac{1}{\beta}}, R_0, R_2\right\{$, где $R_0$ определено в теореме \ref{th_MainTheorem},  выполнена оценка
\begin{equation}
\begin{split}
&\left|\frac{d}{dt}\left( K_{02}(0, t) - K_{01}(0, t)\right)\right|\le
E_0\left[ \min\left( 1, \frac{3R^{-\beta}/\pi}{(2a-t)}\right) + \min\left( 1, \frac{3R^{-\beta}/\pi}{t}\right)\right] + \widehat{\Theta},
\end{split}
\end{equation}
где
$$
E_0 = \frac{D + 4Q_1^2e^{aQ_1}}{8}, \ \widehat{\Theta} = \Theta + \Theta_{\varepsilon},
$$
$$
\Theta = E_1R^{-\beta\frac{r-1}{r}}\ln\left(\frac{\pi a e}{3}R^{\beta}\right)^{\frac{r-1}{r}} + E_2R^{-\beta}\ln\left(\frac{\pi a e}{3}R^{\beta}\right) + E_3(1+\chi(R))R^{\beta-(1-\delta)\frac{p-1}{3p-2}} +
$$
$$
+E_4R^{-\beta} + R^{2\beta - 2(1 - \delta)},
$$
где
$$
E_1 = \frac{3}{2\pi}D_r', \ E_2 = \frac{6}{\pi}\left( Q_1^3e^{aQ_1}+\frac{3}{8}Q_1D\right), $$
$$ E_3 = \frac{5a}{\pi}Q_1C_0, \ E_4 = \frac{F(1 + Q_1e^{aQ_1})e^{2a(1+Q_1e^{aQ_1})}}{\pi},
$$
$$\Theta_{\varepsilon}=\frac{2}{\pi}e^{2a(1 + Q_1e^{aQ_1})}R^{\beta}\varphi_{\beta}(R, \varepsilon) + \frac{5a}{\pi} Q_1R^{\beta}\psi(R, \varepsilon).$$

\end{lemma}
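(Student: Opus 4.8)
The plan is to run the argument of Lemma~\ref{lemma_K_2_minus_K_1_estimate} again, but now for the \emph{derivative} in $t$, so that the integrand carries an extra factor $-iz$; this extra factor is paid for by one more integration by parts in the spatial variable, which introduces second derivatives of $H_{0j}$ (controlled by Lemma~\ref{lemma_pd_2H_estimate}) and a part that is only in $L_1$ (controlled by bootstrapping on Theorem~\ref{th_MainTheorem}). Concretely, put $k(s):=\tfrac{d}{ds}\bigl(K_{02}-K_{01}\bigr)(0,s)=\tfrac14(q_2-q_1)(s/2)+\bigl(\partial_tH_{02}-\partial_tH_{01}\bigr)(0,s)$; by \eqref{eq_add_conditions} the first summand is in $AC[0,a]$ and by Lemma~\ref{lemma_pd_2H_estimate} the second is in $AC[0,2a]$, so $k\in AC[0,2a]$. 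From \eqref{eq_K2_minus_K1_Jost} and \eqref{eD_psi_represent1} one has $-iz(\psi_2(z)-\psi_1(z))=\int_0^{2a}k(s)e^{izs}\,ds$, and the Fourier–inversion (Dirichlet kernel) argument that gave \eqref{eq_K2_minus_K1_Jost} yields, for $t\in(0,2a)$, the representation $k(t)=\lim_{A\to\infty}\tfrac1{2\pi}\int_{-A}^A(-ix)(\psi_2(x)-\psi_1(x))e^{-ixt}\,dx$, which we split at $|x|=R^\beta$ exactly as in Section~5.

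On the far part $|x|>R^\beta$ we integrate by parts once in $s$. The boundary terms involve $k(0)$ and $k(2a)$; since $(\partial_tH_{0j})(0,0)=\tfrac18\bigl(\int_0^aq_j\bigr)^2$ and $(\partial_tH_{0j})(0,2a)=0$, hypothesis \eqref{eq_add_conditions} gives $k(0)=\tfrac14(q_2-q_1)(0)$ and $k(2a)=\tfrac14(q_2-q_1)(a)$, so $|k(0)|,|k(2a)|\le\tfrac14 D\le 2E_0$. For $k'$ we use $k'(s)=\tfrac18(q_2-q_1)'(s/2)+\bigl(\partial_t^2H_{02}-\partial_t^2H_{01}\bigr)(0,s)$ and Lemma~\ref{lemma_pd_2H_estimate}, which writes the last term as $-\tfrac14[\rho_2q_2-\rho_1q_1](s/2)+G(s)$ with $\|G\|_\infty\le Q_1^3e^{aQ_1}+\tfrac38Q_1D$ and $\rho_2q_2-\rho_1q_1=\rho_2(q_2-q_1)+(\rho_2-\rho_1)q_1$. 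The integration by parts supplies the decay $1/z$, so the far part breaks into four convergent pieces. The boundary piece, via Lemma~\ref{lemma_vp_estimates} applied at $u=t$ and $u=t-2a$, yields the first term of \eqref{eq_K_2_minus_K_1_estimate}, i.e.\ the two $\min$-terms with coefficient $E_0$. The $(q_2-q_1)'$ piece, via Lemma~\ref{lemma_double_int_estimate} with $s=r$ and $\|(q_2-q_1)'(\cdot/2)\|_r\le 2^{1/r}D_r'$, yields the $E_1$-term. The $G$ piece, via Lemma~\ref{lemma_double_int_estimate} with $s=\infty$, yields the $E_2$-term. Finally, in the $[\rho_2q_2-\rho_1q_1]$ piece one uses $|\rho_2(q_2-q_1)(\cdot/2)|\le Q_1|(q_2-q_1)(\cdot/2)|\in L_p$ together with the a priori bound $\bigl|(\rho_2-\rho_1)(y/2)\bigr|=\bigl|\int_{y/2}^a(q_2-q_1)\bigr|\le\psi(R,\varepsilon)+C_0(1+\chi(R))R^{-(1-\delta)\frac{p-1}{3p-2}}$ from Theorem~\ref{th_MainTheorem}; this produces the $E_3(1+\chi(R))R^{\beta-(1-\delta)\frac{p-1}{3p-2}}$-term and contributes $\tfrac{5a}{\pi}Q_1R^\beta\psi(R,\varepsilon)$ to $\Theta_\varepsilon$.

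For the near part, since $q_1,q_2$ are compactly supported the functions $\psi_1,\psi_2$ are entire, hence $z\mapsto -iz(\psi_2(z)-\psi_1(z))e^{-izt}$ is entire; we deform $[-R^\beta,R^\beta]$ up to the contour $\Gamma(A,R^\beta)$ of Lemma~\ref{lemma_K_2_minus_K_1_estimate} with the same $A=1+Q_1e^{aQ_1}$. On the two vertical segments $|z|\ge R^\beta$, so by the strong estimate \eqref{eq_strong_estimate} $\bigl|{-iz}(\psi_2-\psi_1)\bigr|\le F/|z|\le F/R^\beta$, and with $e^{\operatorname{Im}(z)t}\le e^{2aA}$ and length $A$ this gives $E_4R^{-\beta}$. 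On the top segment $[-R^\beta+Ai,R^\beta+Ai]$ we use the splitting $\psi_2-\psi_1=\psi_1(W-1)+\psi_2\bigl(1-e^{g_1-g_2}\Pi_1/\Pi_2\bigr)$: since $\int_0^aq_1=\int_0^aq_2$ by \eqref{eq_add_conditions}, the \emph{refined} part of Lemma~\ref{lemma_main_analytic} applies, and taking $\eta$ of order $e^{-aQ_1-2aA}$ the second summand contributes $R^{2\beta-2(1-\delta)}$; for the first summand $|z-z_n^{(j)}|\ge 1$ and $N_{\psi_j}(R)\le 3aeR$ (Lemma~\ref{lemma_N_estimate}) give $|W(z)-1|\le\varphi_\beta(R,\varepsilon)$ (with $\varphi_\beta$ as in \eqref{eq_varphi}), a further $\varepsilon$-contribution to $\Theta_\varepsilon$. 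Adding the far and near parts, collecting the $\varepsilon$-free terms into $\Theta$ and the rest into $\Theta_\varepsilon$ (which is $O(\varepsilon)$ as $\varepsilon\to0$ since $\varphi_\beta(R,\varepsilon),\psi(R,\varepsilon)=O(\varepsilon)$) gives the claim; the condition $R>R_5=\max\{a^{-1/\beta},R_0,R_2\}$ is exactly what is needed to ensure $R^\beta>1/a$ (so the $\rho$-hypotheses of Lemmas~\ref{lemma_vp_estimates}--\ref{lemma_double_int_estimate} and the logarithmic integral bound hold), that Theorem~\ref{th_MainTheorem} may be invoked, and that the refined Lemma~\ref{lemma_main_analytic} is available.

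The main obstacle is that $k'$ — equivalently $\bigl(\partial_t^2H_{02}-\partial_t^2H_{01}\bigr)(0,\cdot)$ — is \emph{not} bounded in any $L_s$ with $s>1$, because of the summand $\tfrac14\rho_j(\cdot/2)q_j(\cdot/2)$ with $q_j$ only in $L_1$; the Fourier–tail lemmas of Section~5 cannot digest it directly. The resolution is the decomposition of Lemma~\ref{lemma_pd_2H_estimate} together with the observation that the genuinely bad factor $\rho_2-\rho_1=\int_{\cdot}^a(q_2-q_1)$ is precisely the quantity already estimated in Theorem~\ref{th_MainTheorem}; hence this lemma must be placed \emph{after}, and bootstrapped on, the first main theorem, which is also why the rate $R^{\beta-(1-\delta)\frac{p-1}{3p-2}}$ (inherited from Theorem~\ref{th_MainTheorem}) and the $O(\varepsilon)$ term $R^\beta\psi(R,\varepsilon)$ appear. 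Everything else — pushing the extra factor $z$ through the contour deformation and tracking the explicit constants $E_0,\dots,E_4$ — is routine but lengthy bookkeeping parallel to the proof of Lemma~\ref{lemma_K_2_minus_K_1_estimate}.
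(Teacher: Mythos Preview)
Your overall architecture matches the paper's exactly: Fourier inversion for $k(t)=\tfrac{d}{dt}(K_{02}-K_{01})(0,t)$, far/near splitting at $|x|=R^\beta$, one integration by parts in the spatial variable on the far part (giving the boundary terms with $h(0),h(2a)$ and the $E_0$, $E_1$, $E_2$ contributions), and contour deformation to $\Gamma(A,R^\beta)$ on the near part using the \emph{refined} Lemma~\ref{lemma_main_analytic} (available thanks to $\int_0^a q_1=\int_0^a q_2$) together with \eqref{eq_strong_estimate} for the $E_4$ and $R^{2\beta-2(1-\delta)}$ terms and $\varphi_\beta$ for the $\varepsilon$-part. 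All of this is correct and parallels the paper.

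The one place where you diverge from the paper --- and where your write-up has a genuine gap --- is the treatment of the singular summand $\tfrac14[\rho_2q_2-\rho_1q_1](\cdot/2)$ inside $\partial_t^2 H_{02}-\partial_t^2 H_{01}$. The paper does \emph{not} decompose this as $\rho_2(q_2-q_1)+(\rho_2-\rho_1)q_1$. Instead it uses the identity $\rho_jq_j=-\tfrac12(\rho_j^2)'$ and integrates by parts once more in $y$; the boundary terms vanish precisely because $\rho_1(0)=\rho_2(0)$ (hypothesis \eqref{eq_add_conditions}) and $\rho_j(a)=0$, and the $1/x$ is cancelled. One is then left with $\tfrac14\bigl|\int_{|x|>R^\beta}e^{-ixt}\int_0^{2a}e^{ixy}f(y)\,dy\,dx\bigr|$ for the \emph{bounded} function $f(y)=[\rho_2^2-\rho_1^2](y/2)$. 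This tail is rewritten via Fourier inversion as $2\pi f(t)$ minus the Dirichlet-kernel integral over $[-R^\beta,R^\beta]$, and a direct sinc estimate yields $\le 20a R^\beta\|f\|_\infty$; finally $\|f\|_\infty\le 2Q_1\max_x|\int_x^a(q_2-q_1)|$ is controlled by Theorem~\ref{th_MainTheorem}. After the prefactor $\tfrac1{2\pi}\cdot\tfrac14$ this is exactly where $E_3=\tfrac{5a}{\pi}Q_1C_0$ and the $R^\beta$ in front of both $C_0(1+\chi(R))R^{-(1-\delta)\frac{p-1}{3p-2}}$ and $\psi(R,\varepsilon)$ come from.

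Your splitting, by contrast, leaves you with $(\rho_2-\rho_1)q_1$, which is only in $L_1$; Lemma~\ref{lemma_double_int_estimate} does not apply, and the crude $L_1$ bound (swapping the integrals and using $|\int_{|x|>\rho}e^{-ixu}/x\,dx|\le 2\pi$) produces a term of size $\sim Q_1\cdot\|\rho_2-\rho_1\|_\infty$ \emph{without} the extra factor $R^\beta$ --- so it cannot coincide with the stated $E_3$-term. Moreover the companion piece $\rho_2(q_2-q_1)\in L_p$ would, via Lemma~\ref{lemma_double_int_estimate}, contribute an additional $R^{-\beta(p-1)/p}(\ln R)^{(p-1)/p}$ term that is not present in the lemma and is not in general dominated by the $E_1$-term (since no relation between $p$ and $r$ is assumed). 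In short: the missing idea is the antiderivative trick $\rho_jq_j=-\tfrac12(\rho_j^2)'$ followed by the Dirichlet-kernel tail estimate; without it you do not recover the stated constants, and your asserted derivation of $E_3$ is unjustified.
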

{\bf Доказательство.} Оценка интеграла
$$
\int\limits_{|x| > \rho}x(\psi_2(x) - \psi_1(x))e^{-ixt}dx
$$
показывает, что он
сходится равномерно при $t\in [t_1, t_2]\subset (0, 2a)$, а это дает возможность дифференцирования под знаком интеграла:
\begin{equation}
\frac{d}{dt}(K_2(0, t) - K_1(0, t)) = \frac{-i}{2\pi} \left(\int\limits_{|x| \le R^{\alpha}} + \int\limits_{|x| > R^{\alpha}}\right)  x (\psi_2(x) - \psi_1(x))e^{-ixt}dx.
\end{equation}
Представление \eqref{eq_strong_representation} дает
$$
\int\limits_{|x| > \rho}x(\psi_2(x) - \psi_1(x))e^{-ixt}dx = \sum\limits_{k=1}^4 I_k(\rho, t),
$$
где
$$
I_1 = -\frac{h(2a)}{4}\int\limits_{|x| > \rho}\frac{1}{x}e^{-ix(t-2a)}dx, \ I_2 = \frac{h(0)}{4}\int\limits_{|x| > \rho}\frac{1}{x}e^{-ixt}dx,
$$
$$
I_3 = -\int\limits_{|x| > \rho} \frac{1}{x}e^{-ixt}\int\limits_0^{2a}e^{ixy}\left(\frac{\partial^2 H_{02}}{\partial t^2} - \frac{\partial^2 H_{01}}{\partial t^2}\right)(0, y)\,dy\,dx,$$
$$
I_4 = \frac{1}{8}\int\limits_{|x| > \rho}\frac{1}{x}e^{-ixt}\int\limits_0^{2a}e^{ixy}(q_2 - q_1)'\left(\frac{y}{2}\right)\,dy\,dx .
$$
Имеем
$$
|I_1| \le \frac{\pi (D + 4Q_1^2e^{aQ_1})}{2}\min\left( 1, \frac{3/\pi}{ \rho (2a - t)}\right), \ |I_2|\le \frac{\pi (D + 4Q_1^2e^{aQ_1})}{2}\min\left( 1, \frac{3/\pi}{ \rho t}\right),
$$
а при
$\rho > \frac{3}{\pi a }$
$$
|I_4|\le 3 D_r' \rho^{-\frac{r-1}{r}}\ln\left(\frac{\pi a e}{3}\rho\right)^{\frac{r-1}{r}}.
$$
Наконец, оценим $I_3$.
Имеем
$$
|I_3|\le 12 \left\Vert \left[ \frac{\partial ^2 H_{02}}{\partial  t^2} - \frac{\partial ^2 H_{01}}{\partial  t^2}\right](0, \cdot) + \frac{1}{4}\left[ \rho_2q_2 - \rho_1q_1\right]\left(\frac{\cdot}{2}\right)\right\Vert_{\infty}\cdot \rho^{-1}\cdot\ln\left(\frac{\pi a e}{3}\rho\right) +
$$
$$
+\frac{1}{4}\left|\int\limits_{|x|>\rho}e^{-ixt}\int\limits_{0}^{2a}e^{iyx}\left[ \rho_2^2 - \rho_1^2\right]\left(\frac{y}{2}\right) \,dy\,dx\right|;
$$
обозначим
$$
f(y) = \left[ \rho_2^2 - \rho_1^2\right]\left(\frac{y}{2}\right), \ y\in [0, 2a],
$$
тогда $f\in AC[0, 2a]$, $f(0) = f(a) = 0$;
продолжим $f$ нулем на $\mathbb{R}$. Имеем
$$
f(t) = \lim\limits_{\rho\to\infty}\frac{1}{2\pi}\int\limits_{-\rho}^{\rho}e^{-ixt}\int\limits_{\mathbb{R}}f(y)e^{iyx}\,dy\,dx,
$$
и согласно оценке \eqref{eq_MainEstimate}, при выполнении условий теоремы \ref{th_MainTheorem},
$$\Vert f\Vert_{\infty}\le 2Q_1 \left[\psi(R, \varepsilon) + C_0R^{-(1-\delta)\frac{p-1}{3p-2}}(1 + \chi(R))\right];$$ далее,
$$
\int\limits_{|x|>\rho}e^{-ixt}\int\limits_{\mathbb{R}}f(y)e^{iyx}\,dy\,dx  = 2\pi f(t)-\frac{1}{2\pi}\int\limits_{-\rho}^{\rho}e^{-ixt}\int\limits_{\mathbb{R}}f(y)e^{iyx}\,dy\,dx  =
$$
$$=-2\left[ \int\limits_{-\infty}^{t-2a} + \int\limits_{t-2a}^{t+2a} + \int\limits_{t+2a}^{\infty}\right]\frac{\sin(\rho(y-t))}{y-t}[f(y) - f(t)]dy,
$$
$$
\left|\int\limits_{t+2a}^{\infty}\frac{\sin(\rho(y-t))}{y-t}[f(y) - f(t)]dy\right| = \left| f(t)\right| \left|\int\limits_{2a}^{\infty}\frac{\sin(\rho s)}{s}\,ds\right| \le \Vert f\Vert_{\infty}\frac{\pi}{2}\min\left( 1, \frac{2/\pi}{a\rho}\right),
$$
такая же оценка для
$
\left|\int\limits_{-\infty}^{t-2a}\frac{\sin(\rho(y-t))}{y-t}[f(y) - f(t)]dy\right|$ и
$$
\left|\int\limits_{t-2a}^{t+2a}\frac{\sin(\rho(y-t))}{y-t}[f(y) - f(t)]\,dy\right|\le 8a\rho \Vert f\Vert_{\infty};
$$
складывая эти оценки, получаем при  $\rho > \frac{1}{a}$
$$
\left|\int\limits_{|x|>\rho}e^{-ixt}\int\limits_{\mathbb{R}}f(y)e^{iyx}\,dy\,dx \right| \le 20 \Vert f\Vert_{\infty}a\rho.
$$

Наконец, положим $\rho = R^{\beta}$ и оценим интеграл по контуру $\Gamma(A, R^{\beta})$, где
$$
A:=1 + Q_1e^{aQ_1}.
$$
Имеем, согласно оценке \eqref{eq_strong_estimate},
$$
\frac{1}{2\pi}\left|\int\limits_{\pm R^{\beta}}^{\pm R^{\beta} + Ai}-iz(\psi_2(z) - \psi_1(z))e^{-izt}dz\right|\le \frac{AFe^{2aA}}{2\pi}R^{-\beta}.
$$

Согласно лемме \ref{lemma_main_analytic}, при $R > R_0$ и $A < R^{\beta} < R^{1-\delta}$ для $z\in [-R^{\beta} + Ai, R^{\beta} + Ai]$ справедлива оценка
$$
\left| 1 - e^{g_2(z) - g_1(z)}\frac{\Pi_2(R,z)}{\Pi_1(R, z)}\right|\le \eta R^{-2(1-\delta)};
$$
значит, как в доказательстве леммы \ref{lemma_K_2_minus_K_1_estimate}, полагая $\eta = \frac{\pi}{2}e^{-2aA}$, получаем
$$
\left|\frac{1}{2\pi}\int\limits_{-R^{\beta} + Ai}^{R^{\beta} + Ai}-iz(\psi_2(z) - \psi_1(z))e^{-izt} \, dz\right|\le R^{ \beta- 2(1 - \delta)} + \frac{2}{\pi}e^{2aA}R^{2\beta} \varphi_{\beta}(R, \varepsilon),
$$
$$
\frac{1}{2\pi}\left|\int\limits_{-R^{\beta}}^{R^{\beta}}-ix(\psi_2(x) - \psi_1(x))e^{-ixt}dx\right| \le  \frac{AF}{\pi}e^{2aA}R^{-\beta} + R^{2\beta-2(1-\delta)} + \frac{2}{\pi}e^{2aA}R^{\beta} \varphi_{\beta}(R, \varepsilon),$$
где $\varphi_{\beta}$ определено  в \eqref{eq_varphi}.

Используя \eqref{th_MainTheorem}, получаем требуемое при $R > R_5:=\max\left\{ a^{-\frac{1}{\beta}}, R_0, R_2\right\}$.

\begin{cor}
Существуют константы $R_6$ и $F_0$, зависящее от априорных параметров, такие, что при условии $R > R_6$ выполнено
\begin{equation}
\label{eq_optimized}
\begin{split}
&\left| \frac{d}{dt}\left( K_{02}(0, t) - K_{01}(0, t)\right)\right| \le E_0 \left[ \min\left( 1, \frac{3R^{-\beta_*}/\pi}{(2a-t)}\right) + \min\left( 1, \frac{3R^{-\beta_*}/\pi}{t}\right)\right] + \\
&+F_0R^{-\beta^{*}\frac{r-1}{r}}(1 + \lambda(R)),
\end{split}
\end{equation}
где
$$
\beta_* = (1-\delta)\frac{r}{2r-1}\frac{p-1}{3p-2}, \ \lambda(R) = O(\ln (R)), \ R\to\infty.
$$
\end{cor}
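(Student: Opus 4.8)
The plan is to choose the free parameter $\beta$ in the preceding lemma in the optimal way, exactly as $\alpha$ was optimised in the proof of Theorem~\ref{th_MainTheorem}. That lemma gives, for every $\beta\in(0,1-\delta)$ and every $R>\max\{a^{-1/\beta},R_0,R_2\}$,
$$
\left|\frac{d}{dt}\bigl(K_{02}(0,t)-K_{01}(0,t)\bigr)\right|\le E_0\Bigl[\min\left(1,\frac{3R^{-\beta}/\pi}{2a-t}\right)+\min\left(1,\frac{3R^{-\beta}/\pi}{t}\right)\Bigr]+\Theta+\Theta_\varepsilon,
$$
where the five summands of $\Theta$ carry, respectively, the powers $R^{-\beta\frac{r-1}{r}}$, $R^{-\beta}$, $R^{\beta-(1-\delta)\frac{p-1}{3p-2}}$, $R^{-\beta}$, $R^{2\beta-2(1-\delta)}$ of $R$ (the first and third also carrying bounded logarithmic or $\chi(R)$-factors), and $\Theta_\varepsilon=\frac{2}{\pi}e^{2a(1+Q_1e^{aQ_1})}R^{\beta}\varphi_{\beta}(R,\varepsilon)+\frac{5a}{\pi}Q_1R^{\beta}\psi(R,\varepsilon)$ with $\varphi_\beta$ as in \eqref{eq_varphi} and $\psi(R,\varepsilon)=O(\varepsilon)$ from Theorem~\ref{th_MainTheorem}.

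First I would compare the exponents. Since $\frac{r-1}{r}<1$ we have $R^{-\beta}=o\bigl(R^{-\beta(r-1)/r}\bigr)$, so the two $R^{-\beta}$ summands are dominated, up to a factor $O(\ln R)$, by the first one. Since $2-\frac{p-1}{3p-2}>1$ and $\beta<1-\delta$, we have $2\beta-2(1-\delta)\le\beta-(1-\delta)\frac{p-1}{3p-2}$, so the last summand is dominated by the third. Thus, up to $\Theta_\varepsilon$ and logarithmic factors, the bound is of order $R^{-\beta(r-1)/r}+R^{\beta-(1-\delta)\frac{p-1}{3p-2}}$, and the right thing to do is to balance these two exponents: $-\beta\frac{r-1}{r}=\beta-(1-\delta)\frac{p-1}{3p-2}$, i.e. $(1-\delta)\frac{p-1}{3p-2}=\beta\frac{2r-1}{r}$, which gives
$$
\beta_*=(1-\delta)\,\frac{p-1}{3p-2}\cdot\frac{r}{2r-1}\in(0,1-\delta)
$$
(the inclusion because $\frac{r}{2r-1}<1$ and $\frac{p-1}{3p-2}<1$). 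For $\beta=\beta_*$ both exponents equal $-\beta_*\frac{r-1}{r}=-(1-\delta)\frac{r-1}{2r-1}\frac{p-1}{3p-2}$, which is the quantity written $\beta^*\frac{r-1}{r}$ in \eqref{eq_optimized}, with $\beta^*=\beta_*$.

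Then I would put $R_6:=\max\{a^{-1/\beta_*},R_0,R_2\}$ and, for $R>R_6$, substitute $\beta=\beta_*$. The two $\min$-terms become precisely the first bracket of \eqref{eq_optimized}. In the rest, I would collect the constants $E_1,\dots,E_4$ (using $1+\chi(R)\le2$ for $R$ large) into one constant $F_0$, factor out $R^{-\beta_*(r-1)/r}$, and sweep the remaining bounded pieces --- the logarithmic factors $\ln(\frac{\pi ae}{3}R^{\beta_*})^{(r-1)/r}$ and $\ln(\frac{\pi ae}{3}R^{\beta_*})$, the strictly smaller power $R^{-\beta_*}/R^{-\beta_*(r-1)/r}=R^{-\beta_*/r}\to0$, and $\chi(R)$ --- into a correction $\lambda(R)=O(\ln R)$. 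The $\varepsilon$-dependent remainder $\Theta_\varepsilon$ is carried along unchanged (this is the quantity that, after the substitution, feeds into the $\psi'(R,\varepsilon)$ contribution of Theorem~\ref{th_SecondTheorem}). This produces the bound $F_0R^{-\beta_*(r-1)/r}(1+\lambda(R))$ of \eqref{eq_optimized}. The only point that needs real care is the exponent bookkeeping of the middle paragraph --- making sure that, after the balancing, every surviving power of $R$ is $\le R^{-\beta_*(r-1)/r}$ up to logarithms; the remainder of the argument is a mechanical substitution into the preceding lemma.
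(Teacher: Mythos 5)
Your proposal is correct and follows exactly the argument the paper leaves implicit: balancing the exponents $-\beta\frac{r-1}{r}$ and $\beta-(1-\delta)\frac{p-1}{3p-2}$ of the two dominant terms of $\Theta$ yields $\beta_*=(1-\delta)\frac{r}{2r-1}\frac{p-1}{3p-2}$, the remaining powers $R^{-\beta}$ and $R^{2\beta-2(1-\delta)}$ are checked to be subordinate, and the logarithms are absorbed into $\lambda(R)=O(\ln R)$ --- precisely mirroring the optimization of $\alpha_*$ in Theorem~\ref{th_MainTheorem}. Your handling of the $\varepsilon$-dependent remainder $\Theta_\varepsilon$ (carrying it along into the $\psi'$ term of Theorem~\ref{th_SecondTheorem}, since the corollary's statement silently omits it) is also the sensible reading of the paper's intent.
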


{\bf Доказательство теоремы \ref{th_SecondTheorem}.}
Имеем
$$
\frac{\partial  K_{12}}{\partial  t}(0, t) = \frac{d}{dt}\left[ K_{02} - K_{01}\right](0, t) - \left[ K_{02} - K_{01}\right](0, t)\rho_1(t) + $$
$$+\frac{1}{2}\int\limits_{0}^{t}\frac{d}{dy}\left[ K_{02} - K_{01}\right](0, y)\rho_{1}\left(\frac{y+t}{2}\right) dy
$$
и
$$
q_2(x) - q_1(x) = -2\int\limits_{a}^{x}\left( q_1(y + x) - q_2(y-x)\right) \widetilde{K}_{12}(x, y)dy +
$$
$$
+2\int\limits_{0}^{x}\left( q_1(x + y) - q_2(x - y)\right)\widetilde{K}_{12}(y, x)dy - 4\frac{\partial K_{12}}{\partial t}(0, 2x).
$$

При оценке этого выражения с помощью предыдущих неравенств возникают слагаемые, в которые $R$ входит в степени, не превосходящей $-\beta^*\frac{r-1}{r} = -(1-\delta)\frac{r}{2r-1}\frac{p-1}{3p-2}$.

Address:

Lomonosov Moscow State University, Department of Mechanics and Mathematics.

email:  valgeynts@gmail.com;  shkalikov@mi.ras.ru

\end{document}